%%%%Revised1 with referee's comments, V into L4, lemma
\documentclass[11pt,reqno]{amsart}
\usepackage{color, amsmath,amssymb, amsfonts, amstext,amsthm, latexsym}
\usepackage[active]{srcltx}
\usepackage{extarrows}
\allowdisplaybreaks \textheight 20 true cm
 \textwidth  14 true cm

\numberwithin{equation}{section}
\newtheorem{cro}{Corollary}[section]

\newtheorem{prop}{Proposition}[section]
\newtheorem{thm}{Theorem}[section]
\newtheorem{lem}{Lemma}[section]

\begin{document}

\title[Slow entropies and variational principle of subsets ]
{Slow entropy for noncompact sets and variational principle }

\author[D.Kong and E. Chen]
{De-peng Kong and Er-cai Chen }

\address[D.~Kong]
{Institute for the Mathematical Sciences\\
Nanjing Normal University\\
Nanjing, 210046, P.R. China}
\email[D.~Kong]{kongdepengly@163.com}

\address[E.~Chen]
{ Institute for the Mathematical Sciences\\
Nanjing Normal University\\
Nanjing, 210046, P.R. China.
Center of Nonlinear Science\\
Nanjing University\\
Nanjing, 210093, P.R. China } \email[E. ~Chen]{ecchen@njnu.edu.cn}

\thanks{The second author was supported by the National Natural
Science Foundation of China (10971100) and National Basic Research
Program of China (973 Program) (2007CB814805).}

\keywords{topological entropy; topological slow entropy; measure-theoretic slow entropy; variational principle}

\begin{abstract}

This paper defines and discusses the dimension notion of topological slow entropy
of any subset for $\mathbb{Z}^d-$actions. Also, the notion of measure-theoretic slow entropy for $\mathbb{Z}^d-$actions is presented, which is modified from Brin and Katok \cite{BK}. Relations between Bowen topological entropy [3,17] and topological slow entropy are studied in this paper, and several examples of the topological slow entropy in a symbolic system are given. Specifically, a variational principle is proved.

\end{abstract}

\maketitle

%%%%%%%%%%%%%%%%%%%%%%%%%%%%%%%%%%%%%%%%%%%%%%%%%%%%%%%%
%%%%%%%%%%%%%%%%%%%%%%%%%%%%%%%%%%%%%%%%%%%%%%%%%%%%%%%%
%%%%%%%%%%%%%%%%%%%%%%%%%%%%%%%%%%%%%%%%%%%%%%%%%%%%%%%%
\section{Introduction}
Let $(X,\mathcal{T})$ be a topological dynamical system( for short TDS), where $(X,d)$ is a compact
metric space with compatible metric $d$ and $\mathcal{T}$ a continuous $L$-action on $X$.
The set $\mathcal{M}(X)$ denotes the compact convex set of all Borel probability measures, and
$\mathcal{M}(X,\mathcal{T})$ the compact convex set of $\mathcal{T}-$invariant Borel probability measures. We denote by $\mathbb{Z}_+$ the
set of all non-negative integers.

Entropy is one of the most widely used notions in the characterization of the complexity of topological dynamical systems.
Among the notions of entropy, there are two classical ones which are topological entropy
and measure-theoretical entropy. In 1958 Kolmogorov \cite{KL} introduced the definition of
measure-theoretical entropy for an invariant measure and in 1965 Adler \textit{et al} \cite{AK}
defined topological entropy. According to Goodwyn \cite{GY}, Goodman \cite{GA} and Misiurewicz \cite{MI}'s
work, a classical variational principle is found. The topological variational principle
establishes that topological entropy is supremum over all $\mu\in\mathcal{M}(X,T)$ of
the measure-theoretical entropy.

In1973 Bowen \cite{BO} introduced the topological entropy $h_{top}^B(T,K)$ for any set $K$ in a TDS $(X,T)$
resembling Hausdorff dimension. He also proved the remarkable result that
$h_{top}^B(T,G_{\mu})={h}_\mu(T)$ for ergodic measure~$\mu$,
where $G_{\mu}$ denotes the set of \textit{generic} points of $\mu$, and ${h}_\mu(T)$ is
the measure-theoretical entropy. Bowen's topological entropy plays a key role in topological dynamical systems and
dimension theory, see Pesin \cite{PE}. In 1983 Brin and Katok \cite{BK} gave a topological
version of the Shannon-McMillan-Breiman theorem with a local decomposition of
the measure-theoretical entropy. Recently, Feng and Huang \cite{FH} gave a certain variational
relation between Bowen's topological entropy and measure-theoretic entropy for arbitrary
non-invariant compact set. i.e.
\begin{equation*}
h_{top}^B(T,K)=\sup\{\underline{h}_\mu(T):\mu\in \mathcal{M}(X),\mu(K)=1\}
\end{equation*}
where $K$ is any non-empty compact subset of $(X,T)$, $\underline{h}_\mu(T)$ is
 the measure-theoretical lower entropy of Borel probability measure $\mu$ (see \cite{BK,FH}).

The name \textbf{slow entropy} was introduced into dynamical systems by
 Katok and Thouvenot\cite{KT}, Hochman\cite{HO} for $\mathbb{Z}^k$-actions.

In this paper, we use the Carath$\acute{e}$odory dimension structure to study the slow entropy. We define a new topological slow entropy
 $h_{top}^S(\mathcal{T,}Z)$ of any subset
$Z\subseteq X$ for higher dimension $\mathbb{Z}^d-$actions like Hausdorff dimension inspired by Bowen\cite{BO}'s definition of topological
entropy, but it varies or converges more slowly than Bowen's. We also give a modification
of Brin an Katok's definition of measure-theoretical lower entropy, i.e. measure-theoretical slow entropy $\underline{h}_\mu^S(\mathcal{T})$ for higher dimension $\mathbb{Z}^d-$actions. we prove that if Bowen entropy is positive, the topological slow entropy must be infinite.
We prove a  variational principle for slow entropies:
\begin{equation*}
h_{top}^S(\mathcal{T},K)=\sup\{\underline{h}_\mu^S(\mathcal{T}):\mu\in \mathcal{M}(X),\mu(K)=1\},
\end{equation*}
where $K$ is any non-empty compact subset of $X$.

The paper is organized as follows. In Sect. 2 we give the
 definition of topological slow entropy for $\mathbb{Z}^d-$actions in the form of Hausdorff dimension, topological slow entropy using open covers, and some properties. In Sect. 3 some examples in a symbolic dynamical system are given.
In Sect. 4  measure-theoretical slow entropy for $\mathbb{Z}^d-$actions is presented .We prove a variational principle, the topological slow entropy is supermum over all Borel probability measure of
the measure-theoretical  slow entropy.

%%%%%%%%%%%%%%%%%%%%%%%%%%%%%%%%%%%%%%%%%%%%%%%%%%%%%%
%%%%%%%%%%%%%%%%%%%%%%%%%%%%%%%%%%%%%%%%%%%%%%%%%%%%%%
\section{Slow entropies and related properties}

%%%%%%%%%%%%%%%%%%%%%%%%%%%%%%%%%%%%%%%%%%%%%%%%%%%%%%%%%%%%%%%%%%%%%%%%%%%%%%%%%%%%%%%%%%
%%%%%%%%%%%%%%%%%%%%%%%%%%%%%%%%%%%%%%%%%%%%%%%%%%%%%%%%%%%%%%%%%%%%%%%%%%%%%%%%%%%%%%%%%%
In this section, we give definitions and some related properties of two slow entropies of subsets in a topological dynamical system: topological slow entropy for $\mathbb{Z}^d-$actions in the form of dimension and using open covers for $\mathbb{Z}_+-$action.

Firstly, we introduce the notion of $L$-action found in \cite{YA} for convenience. Let $(X,\mathcal{T})$ be a TDS.
 A family of continuous transformations
$\mathcal{T}=\{T^h: X\rightarrow X\}_{h \in L}$ is called  a continuous $L$-action, with $L=\mathbb{Z}^d$ or $\mathbb{Z}_+^d$,$d\geq1$,
if $\mathcal{T}$ satisfies $T^{h+k}=T^h\circ T^k, h,k\in L$,and $T^0$ is the identity map.
For $k\in L$ and $H\subset L$, we set $H+k=\{h+k: h\in H\}$. For $n\in \mathbb{Z}_+$, let
$$H_n:=\{h=(h_1,h_2,\cdots,h_d)\in L:|h_i|<n,1\leq i\leq d\},$$
and $\lambda_n:=\sharp H_n$,where $\sharp G$ denotes the cardinality of the set $G$.

Secondly, the definition of Bowen ball for $\mathbb{Z}^d-$actions is as follow: For
$n\in \mathbb{N},x\in X,\epsilon>0$, we denote by
$B_{n}(x,\epsilon)$ the open Bowen ball of radius $\epsilon>0$ in the metric $d_{n}$ around
$x,$ i.e.
\begin{equation*}
    d_n(x,y)=\max_{h\in H_n}d(T^hx,T^hy),
\end{equation*}
\begin{equation*}
B_n(x,\epsilon)=\{y \in X:d_n(x,y)< \epsilon\}.
\end{equation*}

\subsection{Dimension definition of topological slow entropy.}

We now give the definition of slow entropy with $\mathbb{Z}^d$ or $\mathbb{Z}_+^d$-actions. Let $(X,d)$ be a compact metric space
and $\mathcal{T}$ be a continuous $L$-action on $X$ with $L=\mathbb{Z}^d$ or $\mathbb{Z}_+^d$, $d\geq1$.
For $Z\subset X,s\geq0,N\in\mathbb{N},\epsilon>0$. Define
\begin{equation*}
M^\mathcal{T}(Z,s,N,\epsilon)=\inf_{\mathcal{G}}\bigg\{\sum\limits_{i}\bigg(\frac{1}{\lambda_{n_i}}\bigg)^s\bigg\},
\end{equation*}
where the infimum is taken over all finite or countable families $\mathcal{G}:=\{B_{n_i}(x_i,\epsilon)\}$ such that
$x_i\in X, n_i\geq N$ and $\bigcup_{i}B_{n_i}(x_i,\epsilon)\supset Z$.
Clearly, $M^\mathcal{T}(Z,s,N,\epsilon)$ does not decrease as $N$ increases and $\epsilon$ decreases , hence the
following limit exists:
\begin{equation*}
M^\mathcal{T}(Z,s,\epsilon)=\lim\limits_{N\rightarrow\infty}M^\mathcal{T}(Z,s,N,\epsilon),
\end{equation*}
then we can easily see that there exists the critical value $h^S_{top}(\mathcal{T},Z,\epsilon)\geq0$ such that
\begin{equation*}
 h^S_{top}(\mathcal{T},Z,\epsilon)=\inf\{s:M^\mathcal{T}(Z,s,\epsilon)=0\}=\sup\{s:M^\mathcal{T}(Z,s,\epsilon)=\infty\}.
\end{equation*}
Finally we set

\begin{equation*}
h^S_{top}(\mathcal{T},Z)=\lim\limits_{\epsilon\rightarrow0}h^S_{top}(\mathcal{T},Z,\epsilon),
\end{equation*}
which is called the\textit{ topological slow entropy } for $Z$ with respect to $\mathcal{T}$.

This quantity $h^S_{top}(\mathcal{T},\bullet)$ is defined in way which resembles the Hausdorff dimension, also satisfies most properties like Bowen entropy\cite{BO}
for $\mathbb{Z}_+$ action.  For convenience, we use ~$M(Z,s,N,\epsilon)$, $M(Z,s,\epsilon)$~instead of~$M^\mathcal{T}(Z,s,N,\epsilon)$, $M^\mathcal{T}(Z,s,\epsilon)$ without any confusion.

\begin{prop}

\noindent{\rm (i)}If $d=1$, then $h^S_{top}(T^m,Z)=h^S_{top}({T},Z),m>0;$

\noindent{\rm (ii)}If $Z_1\subset Z_2\subset X$, then $h^S_{top}(\mathcal{T},Z_1)\leq h^S_{top}(\mathcal{T},Z_2);$

\noindent{\rm (iii)} $h^S_{top}(\mathcal{T},\bigcup_{i=1}^{\infty}Z_i)=\sup_{i}h^S_{top}(\mathcal{T},Z_i)$; If $Z\subset X$ is a countable set, then $h^S_{top}(\mathcal{T,}Z)=0.$

\end{prop}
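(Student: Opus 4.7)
For (ii), the plan is immediate from the definition: every countable family of Bowen balls covering $Z_2$ also covers the subset $Z_1$, hence $M(Z_1,s,N,\e)\le M(Z_2,s,N,\e)$ for all parameters, and this monotonicity survives the limits in $N$ and $\e$ and the passage to the critical value.

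For (iii), I would first observe that $N\mapsto M(Z,s,N,\e)$ is nondecreasing, so $M(Z_i,s,\e)=0$ in fact forces $M(Z_i,s,N,\e)=0$ for every $N$. Hence, given $\eta>0$ and any $N$, I can pick a cover $\mathcal{G}_i$ of each $Z_i$ by Bowen balls $B_{n_j}(x_j,\e)$ with $n_j\ge N$ and $\sum_{j}(1/\lambda_{n_j})^s<\eta\,2^{-i}$; the countable union $\bigcup_i\mathcal{G}_i$ then covers $\bigcup_i Z_i$ with total sum $<\eta$, so $M(\bigcup_i Z_i,s,N,\e)=0$. Choosing $s$ just above $\sup_i h^S_{top}(\mathcal{T},Z_i,\e)$ and then sending $\e\to 0$ (monotonicity in $\e$ allows me to commute $\sup_i$ with $\lim_{\e\to 0}$) yields $h^S_{top}(\mathcal{T},\bigcup_i Z_i)\le\sup_i h^S_{top}(\mathcal{T},Z_i)$; the reverse inequality is (ii). For the countable-set claim it suffices to show $h^S_{top}(\mathcal{T},\{x\})=0$, and since $\{x\}\subset B_N(x,\e)$ for every $N$ one has $M(\{x\},s,N,\e)\le(1/\lambda_N)^s\to 0$ as $N\to\infty$ for any $s>0$; a countable set is a countable union of singletons.

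The main content lies in (i). For $d=1$ (the cases $L=\mathbb{Z}$ and $L=\mathbb{Z}_+$ being parallel), the plan is to compare the Bowen balls of $T$ and $T^m$. The inclusion $\{mh:|h|<n\}\subset\{j:|j|<m(n-1)+1\}$ gives directly $B^T_{m(n-1)+1}(x,\e)\subset B^{T^m}_n(x,\e)$, while the uniform continuity of $T,T^2,\dots,T^{m-1}$ yields, for each $\e>0$, a $\de>0$ with $B^{T^m}_n(x,\de)\subset B^T_{mn}(x,\e)$. Translating these to the covering sums, I would take any $T$-cover of $Z$ with sizes $k_i\ge N$ and convert it to a $T^m$-cover via $n_i=\lfloor(k_i-1)/m\rfloor+1$, obtaining an estimate of the form $M^{T^m}(Z,s,N',\e)\le C_{m,s}\,M^T(Z,s,N,\e)$ with $N'\to\infty$ as $N\to\infty$; the reverse direction converts a $T^m$-cover at scale $\de$ with sizes $n_i\ge N$ into a $T$-cover at scale $\e$ with sizes $mn_i\ge mN$, giving $M^T(Z,s,mN,\e)\le C'_{m,s}\,M^{T^m}(Z,s,N,\de)$. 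Since the multiplicative constants $C_{m,s},C'_{m,s}$ (with $s$ fixed) do not change whether $M$ equals $0$ or $\infty$, the critical exponents agree at each scale up to the shift $\e\leftrightarrow\de$, and letting $\e\to 0$ (which forces $\de\to 0$) eliminates the shift and delivers $h^S_{top}(T^m,Z)=h^S_{top}(T,Z)$. I expect the main technical obstacle to be the clean bookkeeping of these constants and scale shifts rather than any conceptual difficulty.
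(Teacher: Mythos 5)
Your proposal is correct and follows essentially the same route as the paper: part (i) is proved by the same two Bowen-ball comparisons (re-indexing a $T$-cover by $\lceil k_i/m\rceil$ to get a $T^m$-cover, and using uniform continuity of $T,\dots,T^{m-1}$ to turn a $T^m$-cover at scale $\delta$ into a $T$-cover at scale $\epsilon$ with sizes $mn_i$), with the same multiplicative constants that wash out at the level of critical exponents. Parts (ii) and (iii) are omitted in the paper as routine; your countable-subadditivity argument and the singleton computation correctly supply them.
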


\begin{proof}
We will only prove Proposition (i), others' proof  are omitted, because we can get them  from the definition without difficulty.
Suppose a finite or countable family $\mathcal{G}=\{ B_{n_i^T}(x_i,\epsilon)\},x_i\in X,n_i^T\geq N$, and $\bigcup_i B_{n_i^T}(x_i,\epsilon)\supset Z$ corresponding to~$T$~in the situation of $d=1$.
Suppose
\begin{eqnarray*}
B_{n_i^{T^m}}(x_i,\epsilon)&=&
\{y\in X: \text{max}_{0\leq j<n_i^{T^m}}d(T^{mj}x_i,T^{mj}y)<\epsilon\}\\
&= & \bigcap_{j=0}^{n_i^{T^m}-1}T^{-mj}B(T^{mj}x_i,\epsilon).
\end{eqnarray*}
We may suppose $n_i^{T^m}\geq N$ as well, then $$n_i^{T^m}= \lceil\frac{n_i^T}{m}\rceil\geq \{N,\frac{n_i^T}{m}-\frac{m-1}{m}\},$$
where $\lceil a\rceil$ denotes the integral part of a real number $a$. Obviously,
$$B_{n_i^{T^m}}(x_i,\epsilon)\supset \bigcap_{p=0}^{n_i^T-1}T^{-p}B(T^px_i,\epsilon)= B_{n_i^T}(x_i,\epsilon),$$
which implies ~$\bigcup_i B_{n_i^{T^m}}(x_i,\epsilon)\supset Z$. Then
\begin{eqnarray*}
\sum_i\bigg(\frac{1}{n_i^{T^m}}\bigg)^s&\leq&
\sum_i\bigg(\frac{m}{n_i^T-(m-1)}\bigg)^s\bigg(\frac{n_i^T}{n_i^T}\bigg)^s\\
&\leq&m^s\bigg(\frac{N}{N-(m-1)} \bigg)^s\sum_i\bigg(\frac{1}{n_i^{T}}\bigg)^s.
\end{eqnarray*}
Taking the infimum of both sides, and we get
\begin{equation*}
    M^{T^m}(Z,s,N,\epsilon)\leq m^s\bigg(\frac{N}{N-(m-1)} \bigg)^sM^T(Z,s,N,\epsilon).
\end{equation*}
By letting $N\rightarrow +\infty$, then $ M^{T^m}(Z,s,\epsilon)\leq m^s M^T(Z,s,\epsilon)$, and then
$$h^S_{top}(T^m,Z,\epsilon)\leq h^S_{top}(T,Z,\epsilon).$$
Letting $\epsilon\rightarrow0$, we have $h^S_{top}(T^m,Z)\leq h^S_{top}(T,Z)$.

Next, we will prove the reverse side. Because $T$ is uniformly continuous, for $\forall \epsilon>0, \exists \delta>0$~ such that
\begin{equation*}
    d(x,y)<\delta \Rightarrow \text{max}_{0\leq j\leq m-1}d(T^jx,T^jy)<\epsilon.
\end{equation*}
We suppose Bowen family $\mathcal{G}=\{B_{n_i^{T^m}}(x_i,\delta)\}$, $x_i\in X, n_i^{T^m}\geq N$ corresponding to~$T^m$~, and
 $\bigcup_{B\in \mathcal{G}}B\supset Z$, then
\begin{equation*}
   \bigcup_{i}B_{mn_i^{T^m}}(x_i,\epsilon)\supset Z.
\end{equation*}
In fact, for any arbitrary $z\in Z$, there exists some ${n_{i_0}^{T^m}}\geq N$ such that ~$z\in B_{n_{i_0}^{T^m}}(x_{i_0},\delta)$, i.e.
\begin{equation*}
   d(T^{mk}z,T^{mk}x_{i_0})<\delta, \ \ 0\leq k\leq n_{i_0}^{T^m}-1.
\end{equation*}
So \begin{equation*}
    d(T^{m{k}+j}z,T^{m{k}+j}x_{i_0})<\epsilon, \ \  0\leq k\leq n_{i_0}^{T^m}-1, \ \ 0\leq j\leq m-1.
\end{equation*}
then $z\in B_{mn_{i_0}^{T^m}}(x_{i_0},\epsilon)$, and then $\bigcup B_{mn_{i}^{T^m}}(x_{i},\epsilon)\supset Z$.

We notice that  any finite or countable family ~$\mathcal{G}^{'}=\{B_{n_j^T}(x_j,\epsilon)\}$ covering $Z$  contains $\{B_{mn_i^{T^m}}(x_i,\epsilon)\}$. So we get
\begin{equation*}
    \inf_{\mathcal{G}}\sum_i\bigg(\frac{1}{n_i^{T^m}} \bigg)^s= m^{-s}\inf_{\mathcal{G}}\sum_i\bigg(\frac{1}{mn_i^{T^m}} \bigg)^s\geq m^{-s}\inf_{\mathcal{G}^{'}}\sum_j\bigg(\frac{1}{n_j^{T}} \bigg)^s.
\end{equation*}
and
\begin{equation*}
    M^{T^m}(Z,s,N,\delta)\geq m^{-s}M^T(Z,s,mN,\epsilon).
\end{equation*}
Letting $N\rightarrow +\infty$, hence $ M^{T^m}(Z,s,\delta)\geq m^{-s} M^T(Z,s,\epsilon)$. And noticing $\delta,\epsilon$ arbitrary,
 we get $h^S_{top}(T^m,Z)\geq h^S_{top}(T,Z)$.

\end{proof}
We point out $h^B_{top}(T,\bullet)$ Bowen topological entropy defined by using Bowen balls. For details, see[18, Page 74].
\begin{prop}
\noindent{\rm (i)} For $\mathbb{Z}_+$-action, $Z\subset X$, $h^S_{top}(T,Z)\geq h^B_{top}(T,Z);$

\noindent{\rm (ii)} For $\mathbb{Z}_+$-action, if $h^B_{top}(T,Z)>0$, then $h^S_{top}(T,Z)=+\infty$.
\end{prop}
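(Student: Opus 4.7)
The plan is to reduce both claims to a single polynomial-versus-exponential comparison between the weight $(1/\lambda_{n_i})^s$ appearing in $M(Z,s,N,\epsilon)$ and Bowen's weight $e^{-s n_i}$ appearing in $M^B(Z,s,N,\epsilon)$. For a $\mathbb{Z}_+$-action one has $\lambda_n=n$, so $(1/\lambda_n)^s$ decays only polynomially while $e^{-tn}$ decays exponentially. Thus for every pair $s,t>0$ there is a threshold $n_0=n_0(s,t)$ with
\[
\bigl(1/\lambda_n\bigr)^s \;\ge\; e^{-tn}\qquad (n\ge n_0),
\]
since $s\log n\le tn$ eventually. This single inequality drives both parts.

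For (i), fix $\epsilon>0$ and any $s<h^B_{top}(T,Z,\epsilon)$. Apply the comparison with $t=s$. For every $N\ge n_0(s,s)$ and every admissible Bowen cover $\{B_{n_i}(x_i,\epsilon)\}$ with $n_i\ge N$, the termwise bound gives $\sum_i(1/\lambda_{n_i})^s\ge\sum_i e^{-s n_i}$, hence $M(Z,s,N,\epsilon)\ge M^B(Z,s,N,\epsilon)$. Letting $N\to\infty$, and using $M^B(Z,s,\epsilon)=\infty$ by choice of $s$, yields $M(Z,s,\epsilon)=\infty$, so $h^S_{top}(T,Z,\epsilon)\ge s$. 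Taking the supremum over such $s$ and then $\epsilon\to 0$ closes (i).

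For (ii), assume $h^B_{top}(T,Z)>0$ and pick $\epsilon_0>0$ and $t>0$ with $h^B_{top}(T,Z,\epsilon)>t$ (so $M^B(Z,t,\epsilon)=\infty$) for all $\epsilon<\epsilon_0$. Now choose an arbitrary $s>0$, however large, and apply the comparison with this $s$ and the fixed $t$: for $N\ge n_0(s,t)$ every admissible cover satisfies $\sum_i(1/\lambda_{n_i})^s\ge\sum_i e^{-t n_i}$, so
\[
M(Z,s,N,\epsilon)\;\ge\;M^B(Z,t,N,\epsilon).
\]
Sending $N\to\infty$ forces $M(Z,s,\epsilon)\ge M^B(Z,t,\epsilon)=\infty$, hence $h^S_{top}(T,Z,\epsilon)\ge s$. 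Since $s>0$ was arbitrary, $h^S_{top}(T,Z,\epsilon)=+\infty$ for every $\epsilon<\epsilon_0$, and $\epsilon\to 0$ finishes the argument.

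The only mildly delicate point, and the one I would flag as the main thing to be careful about, is the order of limits: the termwise bound $(1/\lambda_n)^s\ge e^{-tn}$ is valid only for $n\ge n_0(s,t)$, so one must restrict to covers with $N\ge n_0(s,t)$ \emph{before} taking $N\to\infty$ in the definition of $M(Z,s,\epsilon)$. This is harmless because $N\to\infty$ is already built into that definition. Beyond this bookkeeping I expect no real obstacle; all the content sits in the gap between polynomial and exponential decay, which in part (ii) allows the slow entropy to ``absorb'' any choice of $s$ however large.
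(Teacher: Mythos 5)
Your proof is correct and follows essentially the same route as the paper: part (ii) in both cases rests on the polynomial-versus-exponential comparison $(1/n)^s \ge e^{-tn}$ for large $n$, which transfers divergence of the Bowen sums $\sum_i e^{-tn_i}$ to divergence of $\sum_i (1/n_i)^s$ for every $s$. If anything, your write-up is tidier than the paper's (which treats (i) as obvious and phrases the cover/infimum bookkeeping in (ii) somewhat loosely), since you make the termwise bound uniform over admissible covers before passing to the infimum and the limit $N\to\infty$.
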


\begin{proof}
(i) is obvious. We only prove (ii). We suppose $\forall \beta >0, h_{top}^B(T,Z)=a>0$. From the definition of Bowen entropy,
for arbitrary $\delta>0$, there exists $\epsilon_0>0$, for arbitrary $0<\epsilon<\epsilon_0$, we have
\begin{equation*}
    0<a-\delta< h_{top}^B(T,Z,\epsilon)<a+\delta,
\end{equation*}
which implies $M^B(Z,a-\delta,\epsilon)=+\infty$. Because $M^B(Z,a-\delta,N,\epsilon)$ increases as $N$, so
\begin{equation*}
   M^B(Z,a-\delta,N,\epsilon)\rightarrow+\infty,~ as ~N\rightarrow\infty.
\end{equation*}
Then for any subfamily $\mathcal{G}$ of $Z$, $\sum_i e^{-n_i(a-\delta)}\rightarrow+\infty$. For arbitrary real number $K>0$,
because $\frac{e^{-n_i(a-\delta)}}{(\frac{1}{n_i})^K}=\frac{n_i^K}{e^{n_i(a-\delta)}}\rightarrow 0$, i.e.
For arbitrary $\varepsilon_1>0$, there is some $n^{'}$, for $n_i>n^{'}$,
$\bigg(\frac{1}{{n_i}}\bigg)^K>\frac{1}{\varepsilon_1}e^{-n_i(a-\delta)}$.
Hence $\sum_i\bigg(\frac{1}{n_i}\bigg)^K\rightarrow+\infty$. And thus
\begin{equation*}
   \inf_{\mathcal{G}}\bigg\{\sum\limits_{i}\bigg(\frac{1}{{n_i}}\bigg)^K\bigg\}\rightarrow+\infty,
\end{equation*}
that is $M^S(Z,K,N,\epsilon)\rightarrow+\infty$~as~$N\rightarrow\infty$. Letting $N\rightarrow +\infty$, we have $M^S(Z,K,\epsilon)=+\infty.$
So \begin{equation*}
    h^S(T,Z,\epsilon)\geq K.
\end{equation*}
Because $\epsilon$ and $K$ are arbitrary, hence $h^S(T,Z)=+\infty$. Thus we complete the proof.

\end{proof}

We now give a equivalent definition of $h^S_{top}({T},\bullet)$ for the situation $d=1$, which comes from Bowen \cite{BO}. We set $(X,T)$ be a TDS,
$\mathcal{U}\in C_X^o$  a finite open cover of $X$. $Z\subset X,K\subset X$, let
\begin{equation*}
n^T_{\mathcal{U}}(K)=
\begin{cases}
0& \text{if ~$K\nsucceq \mathcal{U}$},\\
+\infty & \text{ if~$T^iK\succeq \mathcal{U}$,~for all~$i\in \mathbb{Z}_+$},\\
k& \text{$k=\max\{j\in \mathbb{N}: T^i K\succeq \mathcal{U},i =0,1,\cdots,j-1$\}~ ohterwise}.
\end{cases}
\end{equation*}
For $k\in \mathbb{N}$, we define
\begin{equation*}
   \mathfrak{ G}(T,\mathcal{U},Z,k)=\{\mathcal{E}:\mathcal{E}\text{~is~}\text{a countable family of subsets of X,}Z\subseteq \bigcup\mathcal{E}, \mathcal{E}\succeq\mathcal{U}_0^{k-1} \}.
\end{equation*}
Then for each $s\in \mathbb{R}$, set
\begin{equation*}
    m_{T,\mathcal{U}}(Z,s,k)=\inf_{\mathcal{E}\in \mathfrak{G}(T,\mathcal{U},Z,k)}\sum_{E\in \mathcal{E}}\bigg(\frac{1}{n^T_{\mathcal{U}}(E)}\bigg)^s.
\end{equation*}
We write $m_{T,\mathcal{U}}(Z,s,k)=0$ for the case $\emptyset=\mathcal{E}\in \mathfrak{G}(T,\mathcal{U},Z,k)$ by convention. When $k\rightarrow\infty$,
$m_{T,\mathcal{U}}(Z,s,k)$ increases, therefore we could define
\begin{equation*}
    m_{T,\mathcal{U}}(Z,s)=\lim_{k\rightarrow\infty} m_{T,\mathcal{U}}(Z,s,k).
\end{equation*}
We notice that if $s_1\geq s_2$, then $m_{T,\mathcal{U}}(Z,s_1)\leq m_{T,\mathcal{U}}(Z,s_2)$. We define
\textit{Bowen topological slow entropy} $h^{BS}_{\mathcal{U}}(T,Z)$ of $\mathcal{U}$ with respect to ~$T$~ as follows:
\begin{equation*}
 h^{BS}_{\mathcal{U}}(T,Z)=\inf\{s:m_{T,\mathcal{U}}(Z,s)=0\}=\sup\{s:m_{T,\mathcal{U}}(Z,s)=\infty\},
\end{equation*}
and \textit{Bowen topological slow entropy} of $Z$ as follows:
\begin{equation*}
 h^{BS}_{top}(T,Z)=\sup_{\mathcal{U}\in \mathcal{C}_X^o}h^{BS}_{\mathcal{U}}(T,Z).
\end{equation*}

\begin{prop}
\begin{equation*}
h_{top}^{BS}(T,Z)=\lim_{\delta\rightarrow0}h_{top}^S(T,Z,\delta).
\end{equation*}
\end{prop}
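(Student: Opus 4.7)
The plan is to establish the two inequalities $h_{top}^{BS}(T,Z)\leq\lim_{\delta\to 0}h_{top}^S(T,Z,\delta)$ and $\lim_{\delta\to 0}h_{top}^S(T,Z,\delta)\leq h_{top}^{BS}(T,Z)$ by passing between coverings of $Z$ by Bowen balls and coverings refining iterated open covers $\mathcal{U}_0^{k-1}$. The bridge is the standard Lebesgue-number/diameter comparison: a Bowen ball $B_n(x,\delta)$ with $\delta$ smaller than a Lebesgue number of $\mathcal{U}$ automatically refines $\mathcal{U}_0^{n-1}$, and conversely any set $E$ with $n_\mathcal{U}^T(E)\geq n$ sits inside $B_n(x_E,\epsilon)$ whenever every member of $\mathcal{U}$ has diameter less than $\epsilon$. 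The limit on the right exists because $\epsilon\mapsto M(Z,s,\epsilon)$ is monotone, so only the two comparisons matter.

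\emph{First inequality.} Fix $\mathcal{U}\in\mathcal{C}_X^o$ and let $\delta>0$ be a Lebesgue number for $\mathcal{U}$. Given any family $\mathcal{G}=\{B_{n_i}(x_i,\delta)\}$ with $n_i\geq N$ covering $Z$, the image $T^j B_{n_i}(x_i,\delta)\subseteq B(T^j x_i,\delta)$ lies in some $U_j\in\mathcal{U}$ for every $0\leq j\leq n_i-1$, so $n_\mathcal{U}^T(B_{n_i}(x_i,\delta))\geq n_i$. Hence $\mathcal{G}\in\mathfrak{G}(T,\mathcal{U},Z,N)$ and
\[ m_{T,\mathcal{U}}(Z,s,N)\leq \sum_i\Bigl(\frac{1}{n_i}\Bigr)^s. \]
Taking the infimum over $\mathcal{G}$, then $N\to\infty$, gives $m_{T,\mathcal{U}}(Z,s)\leq M(Z,s,\delta)$, whence $h_\mathcal{U}^{BS}(T,Z)\leq h_{top}^S(T,Z,\delta)\leq\lim_{\delta'\to 0}h_{top}^S(T,Z,\delta')$. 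Taking the supremum over $\mathcal{U}$ yields one direction.

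\emph{Second inequality.} Fix $\epsilon>0$ and pick a finite open cover $\mathcal{U}$ each of whose members has diameter $<\epsilon$. For $\mathcal{E}\in\mathfrak{G}(T,\mathcal{U},Z,N)$, enumerate its nonempty members $E_1,E_2,\ldots$, pick $x_k\in E_k$, and set $n_k=n_\mathcal{U}^T(E_k)\geq N$. Whenever $n_k<\infty$, the condition $T^j E_k\subseteq U_j\in\mathcal{U}$ for $0\leq j\leq n_k-1$ and $\mathrm{diam}\,U_j<\epsilon$ force $E_k\subseteq B_{n_k}(x_k,\epsilon)$. For indices with $n_k=+\infty$, choose $m_k\geq N$ so large that $(1/m_k)^s<\eta/2^k$ (with $\eta>0$ a small parameter to be sent to $0$); such $E_k$ is trivially contained in $B_{m_k}(x_k,\epsilon)$. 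The resulting Bowen-ball cover $\mathcal{G}$ has all radii $\geq N$ and, with the convention $1/\infty=0$, satisfies
\[ M(Z,s,N,\epsilon)\leq\sum_k\Bigl(\frac{1}{n_\mathcal{U}^T(E_k)}\Bigr)^s+\eta. \]
Passing to the infimum over $\mathcal{E}$, then sending $\eta\to 0$ and $N\to\infty$, gives $M(Z,s,\epsilon)\leq m_{T,\mathcal{U}}(Z,s)$, hence $h_{top}^S(T,Z,\epsilon)\leq h_\mathcal{U}^{BS}(T,Z)\leq h_{top}^{BS}(T,Z)$. Letting $\epsilon\to 0$ closes the argument.

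The only nonroutine bookkeeping is the treatment of elements $E\in\mathcal{E}$ with $n_\mathcal{U}^T(E)=+\infty$: these cost nothing in $m_{T,\mathcal{U}}$ by the convention $1/\infty=0$, yet must be replaced by genuine finite-radius Bowen balls on the $M$ side. Absorbing their total contribution into an arbitrarily small error $\eta$ via a $2^{-k}$ geometric series is the main subtlety; the remainder is a direct transcription of Bowen's original open-cover/Bowen-ball comparison, adapted to the slow-entropy weights $(1/n)^s$ in place of $e^{-ns}$.
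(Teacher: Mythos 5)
Your argument is correct, and it is exactly the standard Pesin-style comparison that the paper invokes without writing out: the paper's ``proof'' of this proposition consists of the single sentence that it ``can be similarly proved using techniques in \cite{PE}'', so your proposal supplies the missing details rather than diverging from the intended route. Both directions are sound, including the one genuinely delicate point, namely replacing elements with $n^T_{\mathcal{U}}(E)=+\infty$ (which contribute $0$ to $m_{T,\mathcal{U}}$) by finite-order Bowen balls whose total cost is absorbed into an arbitrary $\eta>0$ via the $\eta/2^k$ series. The only nit is in the first inequality: with the usual convention that a Lebesgue number $\delta$ controls sets of \emph{diameter} at most $\delta$, the ball $B(T^jx_i,\delta)$ has diameter up to $2\delta$, so you should take $\delta$ to be half a Lebesgue number of $\mathcal{U}$; this changes nothing downstream, since all that is needed is the existence of some admissible $\delta>0$ for each $\mathcal{U}$.
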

\begin{proof}
It can be similarly proved using techniques in \cite{PE}.

\end{proof}

%%%%%%%%%%%%%%%%%%%%%%%%%%%%%%%%%%%%%%%%%%%%%%%%%%%%%%%%%%%%%%%%%%%%%%%%%%%%%%%%%%%%%%%%%%%%%%%%%%%
%%%%%%%%%%%%%%%%%%%%%%%%%%%%%%%%%%%%%%%%%%%%%%%%%%%%%%%%%%%%%%%%%%%%%%%%%%%%%%%%%%%%%%%%%%%%%%%%%%%
\subsection{Definition of topological slow entropy using open covers.}
We set $(X,T)$ be a TDS,
$\mathcal{U}\in C_X^o$  a finite open cover of $X$. Set $N(\mathcal{U},Z)$  to be the minimal cardinality of
sub-families $\mathcal{V}\subset\mathcal{U}$ with $\bigcup \mathcal{V}\supset Z$, where $\bigcup \mathcal{V}=\bigcup_{V\in \mathcal{V}}V$.
And we write $N(\mathcal{U},\emptyset)=0$. Obviously, $N(\mathcal{U},Z)=N(T^{-1}\mathcal{U},Z)$. Let
\begin{equation*}
    h_{\mathcal{U}}^S(T,Z)=\limsup_{n\rightarrow \infty}\frac{1}{\log n}\log N(\mathcal{U}_0^{k-1},Z).
\end{equation*}
$h_{\mathcal{U}}(T,Z)$ increases with respect to $\mathcal{U}$. We define the topological slow entropy of $Z$ by
\begin{equation*}
    h^S(T,Z)=\sup_{\mathcal{U}\in C_X^o} h_{\mathcal{U}}^S(T,Z).
\end{equation*}

\noindent{\textbf{Remark}}: From the definition of topological slow entropy and topological entropy using open covers, there exists a relation:
\begin{equation*}
    \frac{1}{n}\log N(\mathcal{U}_0^{k-1},Z)=\frac{1}{\log n}\log N(\mathcal{U}_0^{k-1},Z)\cdot \frac{\log n}{n}.
\end{equation*}
Noticing $\frac{\log n}{n}\rightarrow 0$ as $n\rightarrow\infty$. Therefore, if $h_{top}(T,Z)>0$, then $h^S(T,Z)=+\infty$; if $h^S(T,Z)<+\infty$, then
$h_{top}(T,Z)=0$.
\begin{prop}
\begin{equation*}
  h_{\mathcal{U}}^{BS}(T,Z)\leq h^S_{\mathcal{U}}(T,Z),\text{and then}\ \  h_{top}^{S}(T,Z)\leq h^S(T,Z).
\end{equation*}
\end{prop}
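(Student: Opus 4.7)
The plan is to reduce to the cover-by-cover inequality $h_{\mathcal{U}}^{BS}(T,Z)\leq h_{\mathcal{U}}^S(T,Z)$, from which the second assertion $h_{top}^{BS}(T,Z)\leq h^S(T,Z)$ follows immediately by taking $\sup_{\mathcal{U}\in\mathcal{C}_X^o}$ on both sides. For the cover-wise inequality I will show that every $s'>h_{\mathcal{U}}^S(T,Z)$ forces $m_{T,\mathcal{U}}(Z,s')=0$, and then let $s'$ decrease to $h_{\mathcal{U}}^S(T,Z)$.

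Fix such an $s'$ and insert an intermediate exponent $s$ with $h_{\mathcal{U}}^S(T,Z)<s<s'$. By the definition of limsup there exists $k_0$ such that $N(\mathcal{U}_0^{k-1},Z)<k^s$ for every $k\geq k_0$. For each such $k$, let $\mathcal{V}_k\subset\mathcal{U}_0^{k-1}$ be a subfamily of minimal cardinality that covers $Z$. Since $\mathcal{V}_k\subset\mathcal{U}_0^{k-1}$, we have trivially $\mathcal{V}_k\succeq\mathcal{U}_0^{k-1}$, so $\mathcal{V}_k\in\mathfrak{G}(T,\mathcal{U},Z,k)$. The key observation is that every $V\in\mathcal{U}_0^{k-1}$ has the form $V=\bigcap_{i=0}^{k-1}T^{-i}U_i$ for some $U_i\in\mathcal{U}$, hence $T^iV\subset U_i$ for $0\leq i\leq k-1$, i.e.\ $T^iV\succeq\mathcal{U}$; this gives $n_{\mathcal{U}}^T(V)\geq k$. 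Plugging these estimates into the definition of $m_{T,\mathcal{U}}$ yields
\begin{equation*}
m_{T,\mathcal{U}}(Z,s',k)\;\leq\;\sum_{V\in\mathcal{V}_k}\left(\frac{1}{n_{\mathcal{U}}^T(V)}\right)^{s'}\;\leq\;|\mathcal{V}_k|\,k^{-s'}\;<\;k^{s-s'}.
\end{equation*}
Since $s-s'<0$, the right-hand side tends to $0$ as $k\to\infty$. Combined with the monotonicity of $m_{T,\mathcal{U}}(Z,s',\cdot)$ in $k$, this forces $m_{T,\mathcal{U}}(Z,s')=\lim_{k\to\infty}m_{T,\mathcal{U}}(Z,s',k)=0$, whence $h_{\mathcal{U}}^{BS}(T,Z)\leq s'$, as desired.

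I do not expect any serious obstacle: beyond routine bookkeeping the only ingredient is the elementary geometric fact that elements of the join $\mathcal{U}_0^{k-1}$ automatically have Bowen $\mathcal{U}$-length at least $k$, so an optimal cover of $Z$ by members of $\mathcal{U}_0^{k-1}$ is already a competitor in $\mathfrak{G}(T,\mathcal{U},Z,k)$ with a controlled sum. The one subtlety worth flagging is the two-parameter trick: the cardinality estimate comes with exponent $s$ while the weights in the definition of $m_{T,\mathcal{U}}$ carry the larger exponent $s'$, and the positive gap $s'-s>0$ is precisely what converts the crude bound $|\mathcal{V}_k|\,k^{-s'}$ into the vanishing quantity $k^{s-s'}$.
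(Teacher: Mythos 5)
Your proof is correct and follows essentially the same route as the paper's: cover $Z$ by a minimal subfamily of the join $\mathcal{U}_0^{k-1}$, observe that each such element has $n_{\mathcal{U}}^T$-value at least $k$, and deduce $m_{T,\mathcal{U}}(Z,s',k)\leq N(\mathcal{U}_0^{k-1},Z)\,k^{-s'}\to 0$. Your insertion of the intermediate exponent $s$ between $h_{\mathcal{U}}^S(T,Z)$ and $s'$ actually tightens the paper's final limsup step, which the paper states somewhat loosely, but the argument is the same in substance.
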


\begin{proof}
We only prove~$h_{\mathcal{U}}^{BS}(T,Z)\leq h^S_{\mathcal{U}}(T,Z)$.
Suppose $$\mathcal{A}_n=\{U_{i_0}\cap T^{-1}U_{i_1}\cap\cdots\cap T^{-n+1}U_{i_{n-1}}:U_{i_{k}}\in \mathcal{U}\},$$
such that $\bigcup\mathcal{A}_n\supseteq Z$. Obviously, $A\in \mathcal{A}_n$, $ n_{T,\mathcal{U}}(A)\geq n, s>0$ and we get
\begin{equation*}
    m_{T,\mathcal{U}}(Z,s,n)\leq \sum_{A\in \mathcal{A}_n}\bigg(\frac{1}{n_{T,\mathcal{U}}(A)}\bigg)^s\leq \sum_{A\in \mathcal{A}_n}\bigg(\frac{1}{n}\bigg)^s=N(\mathcal{U}_0^{k-1},Z)\bigg(\frac{1}{n}\bigg)^s.
\end{equation*}
So
\begin{eqnarray*}
m_{T,\mathcal{U}}(Z,s)&\leq&
\limsup_{n\rightarrow\infty}N(\mathcal{U}_0^{k-1},Z)\bigg(\frac{1}{n}\bigg)^s\\
&=& \limsup_{n\rightarrow\infty}\exp(-\log n(s-\frac{1}{\log n}\log N(\mathcal{U}_0^{k-1},Z))).
\end{eqnarray*}
And if $s>\frac{1}{\log n}\log N(\mathcal{U}_0^{k-1},Z)$, then $m_{T,\mathcal{U}}(Z,s)=0$. Therefore,
\begin{equation*}
    h_{\mathcal{U}}^{BS}(T,Z)\leq h^S_{\mathcal{U}}(T,Z).
\end{equation*}

\end{proof}

%%%%%%%%%%%%%%%%%%%%%%%%%%%%%%%%%%%%%%%%%%%%%%%%%%%%%%%%%%%%%%%%%%%%%%%%%%%%%%%%%%%%%%%%%%%%%%%%%%%
%%%%%%%%%%%%%%%%%%%%%%%%%%%%%%%%%%%%%%%%%%%%%%%%%%%%%%%%%%%%%%%%%%%%%%%%%%%%%%%%%%%%%%%%%%%%%%%%%%%

%%%%%%%%%%%%%%%%%%%%%%%%%%%%%%%%%%%%%%%%%%%%%%%%%%%%%%%%%%
%%%%%%%%%%%%%%%%%%%%%%%%%%%%%%%%%%%%%%%%%%%%%%%%%%%%%%%%%%
%%%%%%%%%%%%%%%%%%%%%%%%%%%%%%%%%%%%%%%%%%%%%%%%%%%%%%%%%%
\section{Examples in a symbolic dynamical system.}
 We take examples with a continuous $L=\mathbb{Z}^d_+$-action  to explain the new
topological slow entropy in a symbolic dynamical system with a special
metric.

Suppose a finite alphabet
$A=\{1,2.\cdots,k\}$, where $k\geq2$. $$A^{L}=\{1,2.\cdots,k\}^{L}=\{(\omega_h)_{h\in L}:\omega_h\in A,h\in L\}.$$
Suppose $\omega,\omega'\in A^L$, let $$n(\omega,\omega')=\min\{k:\omega_h=\omega'_h (h\in H_{k-1}), \omega_h\neq \omega'_h~(\text{for some}~h\in H_k\backslash H_{k-1})\}.$$
Endow $A^{L}$ with the metric
$d(\omega,\omega')=\frac{1}{\lambda_{n(\omega,\omega')}}$ for $\omega,
\omega'\in A^L$, then $d(\bullet,\bullet)$ is a compatible metric for $A^L$.
$A^{L}$ is the onesided
full shift of $k$ symbols, i.e. for $h\in L$, we define the shift action~$\sigma^h:A^L\rightarrow A^L$ as
\begin{equation*}
    (\sigma^h(\omega))_k=\omega_{h+k},k\in L
\end{equation*}
and then $\mathcal{T}=\{\sigma^h\}_{h\in L}$ is a continuous $L$ -action on $A^L$.

\begin{prop}
For any subset $Z\subseteq A^{{L}}$,
$h^S_{top}(\mathcal{\sigma},Z)=\text{dim}_H Z$, where $\text{dim}_H Z$ is denoted
the Hausdorff dimension in $(X,d)$(see\cite{MA}).
\end{prop}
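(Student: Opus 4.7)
\medskip

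\noindent\textbf{Proof proposal.} The plan is to exploit the fact that in this symbolic metric, Bowen balls are literally cylinder sets, and to compare the weights $(1/\lambda_{n_i})^s$ appearing in $M^\mathcal{T}(Z,s,N,\varepsilon)$ with the diameters to the power $s$ appearing in the Hausdorff measure. The equality of the two critical exponents will follow from the observation that the relevant weights differ only by a factor $(\lambda_{n+m}/\lambda_n)^s = ((n+m)/n)^{ds}$ which tends to $1$ as $n\to\infty$.

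\medskip

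\noindent\textbf{Step 1: Identify Bowen balls with cylinders.} Given $\varepsilon>0$, choose $m\in\mathbb{N}$ so that $1/\lambda_{m+1}<\varepsilon\leq 1/\lambda_m$. I will verify that for $\omega\in A^L$ and $n\geq 1$,
\begin{equation*}
B_n(\omega,\varepsilon)=\{\omega'\in A^L:\omega'_h=\omega_h\text{ for all }h\in H_{n+m-1}\},
\end{equation*}
using the computation $\bigcup_{h\in H_n}(h+H_{m+1})=H_{n+m}$ combined with the definition of the metric $d(\omega,\omega')=1/\lambda_{n(\omega,\omega')}$. Consequently $\operatorname{diam} B_n(\omega,\varepsilon)=1/\lambda_{n+m}$.

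\medskip

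\noindent\textbf{Step 2: Slow-entropy cover $\Rightarrow$ Hausdorff cover.} Any family $\mathcal{G}=\{B_{n_i}(x_i,\varepsilon)\}$ with $n_i\geq N$ covering $Z$ is automatically a cover of $Z$ by sets of diameter $\leq 1/\lambda_{N+m}\to 0$ as $N\to\infty$. Since $\lambda_{n_i+m}/\lambda_{n_i}=((n_i+m)/n_i)^d$ (or the analogous $\mathbb{Z}^d$ expression), for any $\eta>0$ we can choose $N$ so large that $(\lambda_{n_i+m}/\lambda_{n_i})^s<1+\eta$ uniformly for $n_i\geq N$. Then
\begin{equation*}
\sum_i\Bigl(\operatorname{diam} B_{n_i}(x_i,\varepsilon)\Bigr)^s=\sum_i\Bigl(\tfrac{1}{\lambda_{n_i+m}}\Bigr)^s\geq (1+\eta)^{-1}\sum_i\Bigl(\tfrac{1}{\lambda_{n_i}}\Bigr)^s,
\end{equation*}
which on taking infima and letting $\eta\to 0$ yields $\mathcal{H}^s(Z)\leq M^\mathcal{T}(Z,s,\varepsilon)$, hence $\dim_H Z\leq h^S_{top}(\mathcal{T},Z,\varepsilon)$ for every $\varepsilon>0$.

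\medskip

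\noindent\textbf{Step 3: Hausdorff cover $\Rightarrow$ slow-entropy cover.} Conversely, given any cover $\{U_i\}$ of $Z$ with $\operatorname{diam} U_i<1/\lambda_k$ and $k$ large, any two points of $U_i$ agree on $H_k$, so $U_i$ is contained in a cylinder $C_i=[\omega^{(i)}|_{H_k}]$ for some $\omega^{(i)}\in U_i$. By Step 1, $C_i=B_{n_i}(\omega^{(i)},\varepsilon)$ for $n_i=k-m+1$, which can be made $\geq N$ by choosing $k$ large. Using $\operatorname{diam} U_i\geq$ some comparable lower bound of the form $1/\lambda_{k_i+1}$ (by assigning to each $U_i$ the exact index $k_i$ with $1/\lambda_{k_i+1}\leq\operatorname{diam} U_i<1/\lambda_{k_i}$) gives $(1/\lambda_{n_i})^s\leq(1+\eta)(\operatorname{diam} U_i)^s$ for $k$ large. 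Taking infima yields the reverse inequality $h^S_{top}(\mathcal{T},Z,\varepsilon)\leq\dim_H Z$.

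\medskip

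\noindent\textbf{Step 4: Pass to the limit $\varepsilon\to 0$.} Since both bounds are valid for every $\varepsilon>0$, we conclude $h^S_{top}(\mathcal{T},Z,\varepsilon)=\dim_H Z$ for all $\varepsilon>0$, and the definition $h^S_{top}(\mathcal{T},Z)=\lim_{\varepsilon\to 0}h^S_{top}(\mathcal{T},Z,\varepsilon)$ gives the claim. The main technical nuisance is coupling the index $m$ (which depends on $\varepsilon$) to the cutoff $N$ so that the multiplicative error $(\lambda_{n+m}/\lambda_n)^s$ can be absorbed; this is controlled uniformly because $n\geq N\to\infty$ while $m$ is fixed once $\varepsilon$ is fixed, so no order-of-limits difficulty arises.
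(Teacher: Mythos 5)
Your proposal is correct and is essentially the paper's own argument: the paper's proof consists precisely of the cylinder identification $B_k(\omega,\epsilon)=C_{k+n-1}(\omega)$ and $\operatorname{diam}(C_j(\omega))=\lambda_{j+1}^{-1}$ followed by the remark ``comparing the two definitions, we get the result,'' and your Steps 1--4 simply carry out that comparison in detail. One small slip worth fixing: the inequality displayed in Step 2 points the wrong way for the conclusion you draw there --- $\dim_H Z\leq h^S_{top}(\mathcal{T},Z,\varepsilon)$ follows from the trivial bound $\bigl(\operatorname{diam}B_{n_i}(x_i,\varepsilon)\bigr)^s=\bigl(1/\lambda_{n_i+m}\bigr)^s\leq\bigl(1/\lambda_{n_i}\bigr)^s$, whereas your $(1+\eta)^{-1}$ estimate is the one needed (together with the cover conversion of Step 3) for the reverse inequality; both estimates are true, so the argument stands.
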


\begin{proof}
In fact, for $m\in \mathbb{N},\omega\in A^L$ we
set a cylinder set as $$C_m(\omega)=\{\omega'\in A^L: \omega_h=\omega'_h,h\in H_m\}.$$
It is not hard to see the $s-$ Hausdorff outer measure of $Z$ can be
$$H(Z,s)=\lim_{\delta\rightarrow0}\inf_{\mathcal{G}}\text{diam}(C_{m_i}(\omega_i))^s,$$
and where the infimum is taken over all finite or countable family $\mathcal{G}:=\{C_{m_i}(\omega_i)\}$, which
covers $Z$ with $\sup_i\text{diam}(C_{m_i}(\omega_i))<\delta$. Let $\epsilon>0$ be sufficiently small and
choose  $n\in \mathbb{N}$ such that $\frac{1}{\lambda_{n+1}}\leq\epsilon<\frac{1}{\lambda_{n}}$. So it also
follows from the choice of the metric $d(\bullet,\bullet)$ that
$B_k(\omega,\epsilon)=C_{k+n-1}(\omega)$ for all $k\in \mathbb{Z}_+$ and $\text{diam}(C_{j}(\omega))=\lambda_{j+1}^{-1}$. Comparing the two definitions, we get the result.

\end{proof}

\begin{prop}
For any real number $0\leq t <+\infty$, there exists a compact subset $E\subset A^{L}$, such that
$h^S_{top}(\mathcal{\sigma},E)=t$.
\end{prop}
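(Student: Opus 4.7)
The plan is to reduce the problem via Proposition 3.1 to constructing, for each $t\in[0,+\infty)$, a compact subset $E\subset A^L$ with $\dim_H E=t$, and then to produce such a set by a Moran-type procedure whose geometry is adapted to the metric $d(\omega,\omega')=1/\lambda_{n(\omega,\omega')}$. The key observation is that cylinders $C_m(\omega)$ have diameter $1/\lambda_{m+1}$, so to achieve an intermediate Hausdorff dimension one must arrange for the number of level-$m$ cylinders that meet $E$ to grow like $\lambda_m^{\,t}$. Since $\lambda_m$ is only polynomial in $m$, this forces the construction to freeze the coordinates at all but a very sparse set of positions in $L$.

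For $t=0$ take $E$ to be any singleton, which is compact and, by Proposition 2.1(iii), satisfies $h^S_{top}(\sigma,E)=0$. For $t>0$, pick two distinct symbols $a,b\in A$ and construct inductively an infinite set $S\subset L$ such that
\[
|S\cap H_m|=\lfloor t\log_2\lambda_m\rfloor \qquad\text{for all $m$ large enough.}
\]
This is possible because $\lambda_{m+1}/\lambda_m\to 1$, so the target cardinalities have increments in $\{0,1\}$ eventually, while $|H_{m+1}\setminus H_m|\ge 1$, allowing a new point of $S$ to be placed in the shell $H_{m+1}\setminus H_m$ whenever needed. Define
\[
E=\{\omega\in A^L:\omega_h=a\text{ for }h\notin S,\ \omega_h\in\{a,b\}\text{ for }h\in S\}.
\]
Then $E$ is a product of finite sets inside the compact space $A^L$, hence compact, and the number of level-$m$ cylinders meeting $E$ equals $2^{|S\cap H_m|}\asymp\lambda_m^{\,t}$.

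The upper bound $\dim_H E\le t$ will follow at once by covering $E$ with its level-$m$ cylinders: the $s$-Hausdorff pre-measure is bounded by $2^{|S\cap H_m|}\lambda_{m+1}^{-s}\asymp\lambda_m^{\,t-s}$, which tends to $0$ for every $s>t$. For the lower bound I plan to apply the mass distribution principle to the natural product measure $\mu$ on $E$ that assigns mass $2^{-|S\cap H_m|}$ to each level-$m$ cylinder intersecting $E$. Any set $B$ of diameter $r$ lies inside a cylinder $C_j$, where $j$ is largest with $1/\lambda_j\ge r$, giving $\mu(B)\le 2^{-|S\cap H_j|}\asymp\lambda_j^{-t}\le c\cdot r^s$ for any fixed $s<t$ and a suitable constant $c=c(s)$. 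This yields $\dim_H E\ge s$ for every $s<t$, hence $\dim_H E\ge t$, and Proposition 3.1 then gives $h^S_{top}(\sigma,E)=t$.

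The main technical point is not the dimension calculation itself, which is routine once the correct growth rate for $|S\cap H_m|$ is identified, but rather the reconciliation of the \emph{logarithmic} density of free coordinates required by the polynomial scale of the metric with the combinatorial freedom available in the shells $H_{m+1}\setminus H_m$. The slow growth of $\log_2\lambda_m$ is precisely what guarantees that at most one new free coordinate need be added per level, so the inductive construction of $S$ proceeds without obstruction, and the floor-function losses in passing from $t\log_2\lambda_m$ to $\lfloor t\log_2\lambda_m\rfloor$ cost only a bounded factor, which is absorbed harmlessly into the constant $c(s)$ of the mass distribution argument.
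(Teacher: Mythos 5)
Your proof is correct, but it takes a genuinely different route from the paper's. The paper disposes of the statement in two lines: it invokes the intermediate--value theorem for Hausdorff dimension quoted from Mattila (every set of dimension $s$ contains compact subsets of every dimension $t<s$), applies it to the ambient space $A^L$, and then converts dimension to slow entropy via Proposition 3.1. You also reduce to Proposition 3.1, but instead of citing the subset theorem you build the set explicitly: a product set whose free coordinates form a sparse set $S\subset L$ with $|S\cap H_m|=\lfloor t\log_2\lambda_m\rfloor$, followed by a covering argument for the upper bound and the mass distribution principle for the lower bound. What your construction buys is self-containedness and it quietly repairs two points the paper leaves implicit: that $\dim_H(A^L,d)=\infty$ (needed for the cited theorem to produce sets of arbitrarily large finite dimension --- here the number of level-$m$ cylinders is $k^{\lambda_m}$, exponential against the polynomial diameter scale $\lambda_{m+1}^{-1}$), and that the intermediate--dimension subset theorem, stated in Mattila for subsets of $\R^n$, actually applies to this compact ultrametric space (the general metric-space version is a nontrivial theorem of Davies--Howroyd type). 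It also isolates the right heuristic: because the metric is polynomial in $m$, only $O(\log\lambda_m)$ free coordinates per level are needed, which is why the inductive choice of $S$ never gets obstructed. Two harmless slips worth noting: a set of diameter $r\le 1/\lambda_j$ is contained in a cylinder of level $j-1$ rather than $j$ (off by one, absorbed by the constant since $\lambda_j/\lambda_{j-1}$ is bounded), and for small $m$ the increments of $\lfloor t\log_2\lambda_m\rfloor$ may exceed $1$, but your restriction to ``$m$ large enough'' already covers this. The paper's argument is shorter if one grants the citation; yours is longer but elementary and verifiable line by line.
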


\begin{proof}
From Mattila\cite{MA}, we know the fact: If the Hausdorff dimension of a set $A$ is $s$, then for any $0<t<s$, there exists a  compact subset $E$ such that $\text{dim}_H E=t$. From theorem 3.1, we get $h^S_{top}(\mathcal{T},E)=\text{dim}_H E$, this means $h^S_{top}(\mathcal{T},E)=t$.

\end{proof}

\begin{prop}
 In the symbolic system $(A^{\mathbb{Z}_+},\sigma,d)$, for any non-empty subset $Z\subset A^{\mathbb{Z}_+}$, $\overline{\text{dim}}_B Z= h^S(\sigma,Z)$, where $\overline{\text{dim}}_B Z$ is denoted the upper Box dimension of $Z$ (see \cite{MA}).

\end{prop}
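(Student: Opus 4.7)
The strategy is to exhibit one distinguished open cover whose slow-entropy contribution already equals $\overline{\dim}_B Z$, and then show that every other finite open cover is ``asymptotically dominated'' by it, so that taking the supremum over covers does not raise the value. Throughout, recall that for $L=\mathbb{Z}_+$ we have $\lambda_n = n$ and, as observed in the proof of Proposition~3.1, $B_k(\omega,\epsilon) = C_{k+n-1}(\omega)$ whenever $1/\lambda_{n+1}\leq \epsilon < 1/\lambda_n$; in particular $\mathrm{diam}\,C_m(\omega) = 1/\lambda_{m+1}$ and every open ball of radius $\epsilon\in (1/\lambda_{n+1},1/\lambda_n]$ coincides with some cylinder $C_n(\omega)$.

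First I would fix the canonical partition $\mathcal{P}=\{[a]:a\in A\}$ by one-symbol cylinders. A direct unravelling of the definitions of $\sigma$ and the cylinders shows that $\mathcal{P}_0^{n-1}=\bigvee_{i=0}^{n-1}\sigma^{-i}\mathcal{P}$ is precisely the partition of $A^{\mathbb{Z}_+}$ into level-$n$ cylinders $C_n$. Because of the metric identification above, the minimum number $N_\epsilon(Z)$ of sets of diameter $\leq \epsilon$ covering $Z$ satisfies $N_\epsilon(Z)=N(\mathcal{P}_0^{n-1},Z)$ for all $\epsilon\in(1/(n+1),1/n]$. Substituting this in the definition of $\overline{\dim}_B$ yields
\begin{equation*}
\overline{\dim}_B Z \;=\; \limsup_{\epsilon\to 0}\frac{\log N_\epsilon(Z)}{-\log\epsilon}
\;=\;\limsup_{n\to\infty}\frac{\log N(\mathcal{P}_0^{n-1},Z)}{\log n}
\;=\; h^S_{\mathcal{P}}(\sigma,Z).
\end{equation*}
Since $h^S(\sigma,Z)=\sup_{\mathcal{U}}h^S_{\mathcal{U}}(\sigma,Z)\geq h^S_{\mathcal{P}}(\sigma,Z)$, this already gives the inequality $h^S(\sigma,Z)\geq\overline{\dim}_B Z$.

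Next, for the reverse inequality, I would take an arbitrary finite open cover $\mathcal{U}\in C_X^o$ and exploit a Lebesgue number $\delta>0$ of $\mathcal{U}$. Choose $m\in\mathbb{N}$ with $1/\lambda_{m+1}<\delta$; then every cylinder $C_m(\omega)$, having diameter at most $1/\lambda_{m+1}$, lies inside some element of $\mathcal{U}$, so the partition $\mathcal{P}_m$ of level-$m$ cylinders refines $\mathcal{U}$. Consequently $(\mathcal{P}_m)_0^{n-1}$ refines $\mathcal{U}_0^{n-1}$, and every cover of $Z$ by the refinement converts to a cover by $\mathcal{U}_0^{n-1}$ of the same cardinality. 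An index computation identical to the one in the first paragraph gives $(\mathcal{P}_m)_0^{n-1}=\mathcal{P}_0^{n+m-2}$, whence
\begin{equation*}
N(\mathcal{U}_0^{n-1},Z)\;\leq\; N(\mathcal{P}_0^{n+m-2},Z).
\end{equation*}
Dividing by $\log n$ and taking $\limsup$, together with $\log(n+m-1)/\log n\to 1$, yields
\begin{equation*}
h^S_{\mathcal{U}}(\sigma,Z)\;\leq\;\limsup_{n\to\infty}\frac{\log N(\mathcal{P}_0^{n+m-2},Z)}{\log(n+m-1)}\cdot\frac{\log(n+m-1)}{\log n}\;=\;h^S_{\mathcal{P}}(\sigma,Z)\;=\;\overline{\dim}_B Z.
\end{equation*}
Taking the supremum over $\mathcal{U}$ then closes the loop, giving $h^S(\sigma,Z)=\overline{\dim}_B Z$.

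The only delicate step is the asymptotic comparison $\log(n+m-1)/\log n\to 1$: although $m$ is a fixed shift once $\mathcal{U}$ is chosen, one must check that the $\limsup$ is indeed insensitive to this additive shift in the index, which is exactly what makes the slow-entropy scale $\log n$ behave like the box-dimension scale $-\log\epsilon=\log n$. Everything else is a combinatorial identification of cylinders with Bowen balls and an application of Lebesgue numbers; no measure-theoretic or variational input is needed.
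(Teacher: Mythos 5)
Your argument is correct and rests on the same core identification as the paper's proof: in this ultrametric, Bowen balls and metric balls are cylinders, so the covering numbers entering $\overline{\dim}_B$ and $h^S_{\mathcal{U}}$ coincide up to a bounded shift of the index, which the factor $\log(n+m-1)/\log n\to1$ absorbs. The one place you go beyond the paper is the reverse inequality: the paper works only with the single generator $\mathcal{U}=\{A_1,\dots,A_k\}$ and tacitly treats $h^S_{\mathcal{U}}(\sigma,Z)$ as if it were the full supremum $h^S(\sigma,Z)$ over all finite open covers, whereas you supply the missing Lebesgue-number argument showing that any finite open cover is refined by some level-$m$ cylinder partition, hence contributes no more than $h^S_{\mathcal{P}}(\sigma,Z)$. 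That step is genuinely needed for the statement as written, so your version is the more complete one; the paper instead spends its effort on the two-sided sandwich
\begin{equation*}
N(\mathcal{U}_0^{m-1},Z)\leq N(Z,\epsilon)\leq N(\mathcal{U}_0^{n+m-1},Z),
\end{equation*}
using ball-counting rather than your diameter-based counting for the box dimension (the two are equivalent here since maximal sets of diameter $\leq\epsilon$ are themselves cylinders). One cosmetic slip: for $\epsilon\in(1/(n+1),1/n]$ a set of diameter $\leq\epsilon$ is contained in a cylinder $C_{n-1}$, so $N_\epsilon(Z)=N(\mathcal{P}_0^{n-2},Z)$ rather than $N(\mathcal{P}_0^{n-1},Z)$; this off-by-one is harmless because the $\log n$ normalization is insensitive to bounded index shifts, which is exactly the point you flag at the end.
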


\begin{proof}
Suppose $Z\subset A^{\mathbb{Z}_+}$, for $0<\epsilon<+\infty$, let $N(Z,\epsilon)$ be the smallest number of $\epsilon-$ball needed to cover $Z$:
\begin{equation*}
   N(Z,\epsilon)=\min\{k:Z\subset \bigcup^k_{i=1}B(\omega_i,\epsilon) \text{~for some}~\omega_i\in A^{\mathbb{Z}_+}\}.
\end{equation*}

Suppose $\mathcal{U}=\{A_1,\cdots,A_k\}$ be a generator, $\omega=(x_0,x_1,\cdots)$, $$A_j=\{\omega: x_0=j\},$$ and every $A_j$ is clopen set for $j=1,\cdots,k$.
 Suppose $\frac{1}{m+1}\leq \epsilon<\frac{1}{m}$ for $m>1$, for any $\omega_i\in  A^{\mathbb{Z}_+}$, Bowen ball $B_n(\omega_i,\epsilon)$ with metric $d(\bullet,\bullet)$ be the cylinder set as follows:
\begin{equation*}
   C_{n+m-1}(\omega_i)= B_n(\omega_i,\epsilon), \text{and}~B_n(\omega_i,\epsilon)\subset B(\omega_i,\epsilon).
\end{equation*}
Noticing $C_{n+m-1}(\omega_i)=\bigcap_{j=0}^{n+m-1}\sigma^{-j}A_{i_j}\in \mathcal{U}_{0}^{n+m-1}$, the union of elements of $\mathcal{U}_{0}^{n+m-1}$ covers $Z$, then
\begin{equation}\label{2.3}
   N(Z,\epsilon)\leq N(\mathcal{U}_{0}^{n+m-1},Z).
\end{equation}
Similarly, $B(\omega_i,\epsilon)\subset C_{m-1}(\omega_i)=\bigcap_{j=0}^{m-1}\sigma^{-j}A_{i_j}\in \mathcal{U}_{0}^{m-1}$, and then
\begin{equation}\label{2.3}
    N(\mathcal{U}_{0}^{m-1},Z)\leq N(Z,\epsilon).
\end{equation}

From (3.1),(3.2), therefore,
\begin{equation*}
 \frac{\log (m-1)}{\log (m+1)} \cdot\frac{\log N(\mathcal{U}_{0}^{m-1},Z)}{\log (m-1)}\leq\frac{\log N(Z,\epsilon)}{-\log \epsilon}\leq\frac{\log N(\mathcal{U}_{0}^{n+m-1},Z)}{\log (n+m-1)}\cdot\frac{\log (n+m-1)}{\log m}.
\end{equation*}
Taking $n,m\rightarrow +\infty$, $\epsilon\rightarrow 0$ and supper limit,  we get
\begin{equation*}
   \overline{\text{dim}}_B Z= h^S(\sigma,Z).
\end{equation*}
\end{proof}

If $(X,d)$ is a compact metric space, $Z\subset X$, $\{U_\alpha\}_{\alpha\geq1}$ is a $\epsilon-$cover of $Z$, if $\mid U_\alpha \mid\leq\epsilon$ for $\epsilon>0$ and $\bigcup_\alpha U_\alpha\supset Z$, where $\mid U \mid$ denotes the diameter of $U$.

\begin{lem}
If $(X,d)$ and $(Y,\rho)$ are both compact metric spaces, $f:X\rightarrow Y$ is a map,  and if there exists  $\delta_0>0$, so that $\forall ~0<\delta\leq\delta_0$, $\mid B\mid \leq \delta$ for $B\subset X$, $f\mid_B $ is a bi-Lipschitz map i.e.
\begin{equation*}
    c_1 d(x,y)\leq \rho (f(x),f(y))\leq c_2 d(x,y) ~~~~~~~for ~~\forall x,y\in B, c_1,c_2>0.
\end{equation*}
Then for any $Z\subset X$, we have $dim_H(f(Z))=dim_H(Z)$.

\end{lem}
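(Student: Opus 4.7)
The strategy is to reduce the given local bi-Lipschitz hypothesis to the classical fact that a globally bi-Lipschitz map between metric spaces preserves Hausdorff dimension. Since $(X,d)$ is compact, I would first cover $X$ by finitely many subsets $B_1,\ldots,B_N$ with $|B_i|\leq \delta_0$ for each $i$ (for instance, take a finite subcover of $X$ by closed balls of radius $\delta_0/3$). Writing $Z_i := Z\cap B_i$, we have $Z=\bigcup_{i=1}^N Z_i$ and $f(Z)=\bigcup_{i=1}^N f(Z_i)$, and the countable stability of Hausdorff dimension gives
$$\dim_H Z \;=\; \max_{1\leq i\leq N}\dim_H Z_i, \qquad \dim_H f(Z) \;=\; \max_{1\leq i\leq N}\dim_H f(Z_i).$$
Hence it suffices to establish $\dim_H f(Z_i)=\dim_H Z_i$ for each fixed $i$.

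For the upper bound $\dim_H f(Z_i)\leq \dim_H Z_i$, I would start with an $\epsilon$-cover $\{U_\alpha\}$ of $Z_i$ with $\epsilon\leq \delta_0$. Since each $U_\alpha\cap B_i$ has diameter at most $\delta_0$, the hypothesis gives $|f(U_\alpha\cap B_i)|\leq c_2|U_\alpha|$, so $\{f(U_\alpha\cap B_i)\}$ is a $(c_2\epsilon)$-cover of $f(Z_i)$ satisfying $\sum_\alpha |f(U_\alpha\cap B_i)|^s\leq c_2^s\sum_\alpha |U_\alpha|^s$; taking infima and letting $\epsilon\to 0$ yields $\mathcal{H}^s(f(Z_i))\leq c_2^s\mathcal{H}^s(Z_i)$. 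For the reverse inequality, I would take a $\delta$-cover $\{V_\beta\}$ of $f(Z_i)$ with $\delta$ small, and set $W_\beta := f^{-1}(V_\beta)\cap B_i$; these sets cover $Z_i$, and for any $x,y\in W_\beta\subset B_i$ the lower bi-Lipschitz bound gives $c_1 d(x,y)\leq \rho(f(x),f(y))\leq |V_\beta|$, so $|W_\beta|\leq c_1^{-1}|V_\beta|$. Thus $\mathcal{H}^s(Z_i)\leq c_1^{-s}\mathcal{H}^s(f(Z_i))$. Together the two bounds show that $\mathcal{H}^s(Z_i)$ and $\mathcal{H}^s(f(Z_i))$ vanish or diverge at exactly the same critical exponent.

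The one genuinely subtle point is the lower-bound direction: since $f$ is only assumed locally bi-Lipschitz and need not be globally injective on $X$, one cannot pull a cover of $f(Z_i)$ back through $f^{-1}$ naively, since a global preimage may have diameter exceeding $\delta_0$ and the bi-Lipschitz inequality will fail on it. Intersecting with $B_i$ before invoking the inequality is precisely what restores the needed bound. Beyond this observation, the argument is routine textbook Hausdorff-measure bookkeeping, and the finiteness of $N$ and uniformity of the constants $c_1,c_2$ make the reduction go through cleanly.
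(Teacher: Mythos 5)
Your proposal is correct, and in the forward (upper-bound) direction it coincides with the paper's argument: push an $\epsilon$-cover of $Z$ through $f$, use the upper Lipschitz bound on each small piece to get $\mathcal{H}^s(f(Z))\leq c_2^{s}\mathcal{H}^s(Z)$, and conclude $\dim_H f(Z)\leq\dim_H Z$. Where you diverge is the reverse inequality. The paper disposes of it in one line --- ``the bi-Lipschitz map $f|_B$ has an inverse $f^{-1}|_B\colon f(B)\to B$, use the above result'' --- which silently assumes one can run the same covering argument through local inverses even though $f$ need not be globally injective and a global preimage $f^{-1}(V)$ can have diameter exceeding $\delta_0$. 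You make this step honest: compactness gives a finite cover $B_1,\dots,B_N$ of $X$ by sets of diameter at most $\delta_0$, finite stability of Hausdorff dimension reduces the claim to each $Z_i=Z\cap B_i$, and intersecting $f^{-1}(V_\beta)$ with $B_i$ before invoking the lower Lipschitz bound yields $|W_\beta|\leq c_1^{-1}|V_\beta|$ and hence $\mathcal{H}^s(Z_i)\leq c_1^{-s}\mathcal{H}^s(f(Z_i))$. So your route is the same in spirit but strictly more careful; it fills the gap the paper leaves open, at the modest cost of the extra decomposition step (which, as you note, is not even needed for the upper bound). The only quantifier worth pinning down is that $c_1,c_2$ must be uniform over all small sets $B$, which is the intended reading of the hypothesis and is what both you and the paper use.
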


\begin{proof}
Suppose $\{U_i\}_{i\geq1}$ is a $\delta-$cover of $Z$, since $f\mid_{U_i}$ is a bi-Lipschitz map for aibitray $i$, so
\begin{equation*}
    \mid f(Z\cap U_i)\mid\leq c_2 \mid Z\cap U_i\mid\leq c_2\delta,
\end{equation*}
then $\{f(Z\cap U_i)\}$ is a $\epsilon:=c_2\delta-$cover of $f(Z)$. For $\forall s>0$,
\begin{equation*}
    \sum_i\mid f(Z\cap U_i)\mid^s\leq c_2^s\sum_i\mid Z\cap U_i\mid^s\leq  c_2^s\sum_i\mid U_i\mid^s,
\end{equation*}
which implies that
\begin{equation*}
    H^s_\epsilon(f(Z))\leq H^s_\delta(Z).
\end{equation*}
Taking $\delta,\epsilon\rightarrow0$, we get $H^s(f(Z))\leq H^s(Z)$.
If $s>\text{dim}_H Z$, then $H^s(f(Z))\leq H^s(Z)=0$, that is $\text{dim}_H f(Z)\leq s$ for all $s>\text{dim}_H Z$, so $\text{dim}_H f(Z)\leq \text{dim}_H Z$.

Noticing the bi-Lipschitz mapping $f\mid_{B}$  has a inverse mapping $f^{-1}\mid_B:f(B)\rightarrow B$ and using the above result, we have $\text{dim}_H f(Z)=\text{dim}_H Z$.

\end{proof}

\begin{prop}
In the symbolic system $(A^{\mathbb{Z}_+},\sigma,d)$, for arbitray $t>0$, there exists a $F_\sigma$ subset $E$, $\sigma E \subset E$, such that $h^S_{top}(\sigma,E)= t$.

\end{prop}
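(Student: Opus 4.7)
The strategy is to take a compact set of prescribed Hausdorff dimension provided by Proposition 3.2 and thicken it by its forward $\sigma$-orbit; the point is that this thickening preserves both the Hausdorff dimension and the $F_\sigma$ structure, while automatically achieving forward invariance. First, by Proposition 3.2 I would choose a compact $F\subset A^{\mathbb{Z}_+}$ with $h^S_{top}(\sigma,F)=t$, so Proposition 3.1 gives $\text{dim}_H F = t$. I would then set
$$E:=\bigcup_{n\geq 0}\sigma^n(F).$$
Since $\sigma$ is continuous, each $\sigma^n(F)$ is compact, hence closed, so $E$ is a countable union of closed sets, i.e.\ an $F_\sigma$ set. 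Forward invariance is automatic: $\sigma E=\bigcup_{n\geq 1}\sigma^n(F)\subset E$.

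The core step is to show $\text{dim}_H E = t$. I would first verify that $\sigma$ is $2$-Lipschitz on $(A^{\mathbb{Z}_+},d)$. Indeed, if $n(\omega,\omega')=k\geq 2$ then $\omega_0=\omega'_0$ and a direct index shift gives $n(\sigma\omega,\sigma\omega')=k-1$, so
$$d(\sigma\omega,\sigma\omega')=\frac{1}{k-1}=\frac{k}{k-1}\,d(\omega,\omega')\leq 2\,d(\omega,\omega');$$
and if $k=1$, then $d(\omega,\omega')=1\geq d(\sigma\omega,\sigma\omega')$. Since Lipschitz maps do not increase Hausdorff dimension, $\text{dim}_H\sigma^n(F)\leq \text{dim}_H F=t$ for every $n\geq 0$. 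Combining this with countable stability of Hausdorff dimension yields
$$\text{dim}_H E=\sup_{n\geq 0}\text{dim}_H\sigma^n(F)=\text{dim}_H F=t,$$
where the last equality uses the $n=0$ term as a lower bound. A final application of Proposition 3.1 gives $h^S_{top}(\sigma,E)=\text{dim}_H E = t$.

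The only real obstacle is the Lipschitz estimate for $\sigma$ in this nonstandard metric; once that is in hand, everything else assembles routinely from the standard behavior of $\text{dim}_H$ under countable unions and Lipschitz maps, together with the identifications $h^S_{top}(\sigma,\cdot)=\text{dim}_H(\cdot)$ and the existence of compact sets of prescribed dimension supplied by Propositions 3.1 and 3.2.
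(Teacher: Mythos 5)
Your proof is correct and follows essentially the same route as the paper: take a compact set of slow entropy $t$ from Proposition 3.2, form $E=\bigcup_{n\ge 0}\sigma^n(F)$, and use the identification $h^S_{top}(\sigma,\cdot)=\dim_H(\cdot)$ together with countable stability of Hausdorff dimension. The only (harmless) variation is that the paper invokes its bi-Lipschitz Lemma 3.1 to get $\dim_H\sigma^n(F)=\dim_H F$ exactly, whereas you more economically use only the one-sided Lipschitz bound for the upper estimate and the $n=0$ term for the lower one.
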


\begin{proof}
From Proposition 3.2 and Lemma 3.1, for any $t>0$, there is a compact subset $Z\subset A^{\mathbb{Z}_+}$, $\textmd{dim}_H \sigma Z=\textmd{dim}_H Z=h^S_{top}(\sigma,Z)= t$. We set $E=\bigcup^\infty_{i=0}\sigma^iZ$, then $E$ is a $F_\sigma$ subset, $\sigma E \subset E$, and $\textmd{dim}_H E=\textmd{dim}_H Z$, which implies $h^S_{top}(\sigma,E)=t$.

\end{proof}

%%%%%%%%%%%%%%%%%%%%%%%%%%%%%%%%%%%%%%%%%%%%%%%%%%%%%%%%%%
%%%%%%%%%%%%%%%%%%%%%%%%%%%%%%%%%%%%%%%%%%%%%%%%%%%%%%%%%%
%%%%%%%%%%%%%%%%%%%%%%%%%%%%%%%%%%%%%%%%%%%%%%%%%%%%%%%%%%

\section{The variational principle for slow entropies.}
 We firstly introduce the measure-theoretic slow entropy. The notion of weighted topological slow entropy is presented, which is important to prove the variational principle.
\subsection{Definition of measure-theoretic slow entropy}

From [2], Brin and Katok defined the local or measure-theoretic entropy for $\mathbb{Z}_+-$action as follows:
Suppose $\mu\in\mathcal{M}(X)$, define
\begin{equation*}
\underline{h}_\mu(T,x)=\lim\limits_{\epsilon\rightarrow0}\liminf\limits_{n\rightarrow\infty}-\frac{1}{n}\log \mu(B_n(x,\epsilon));
\underline{h}_\mu(T,x)=\lim\limits_{\epsilon\rightarrow0}\liminf\limits_{n\rightarrow\infty}-\frac{1}{n}\log \mu(B_n(x,\epsilon)).
\end{equation*}

\begin{equation*}
\underline{h}_\mu(T)=\int\underline{h}_\mu(T,x)d\mu(x);\overline{h}_\mu(T)=\int\overline{h}_\mu(T,x)d\mu(x).
\end{equation*}
They also proved the proposition:
For $\mu\in\mathcal{M}(X,T),\mu-$a.e.$x$, $\underline{h}_\mu(T,x)=\overline{h}_\mu(T,x)$, and
$\int\underline{h}_\mu(T,x)d\mu(x)=h_{\mu}(T).$
Hence for $\mu\in\mathcal{M}(X,T)$, $\underline{h}_\mu(T)=\overline{h}_\mu(T)=h_{\mu}(T).$

Now, we give a modification of measure-theoretic lower entropy of higher dimension $\mathbb{Z}^d$-actions.
Suppose $\mu\in\mathcal{M}(X)$, define
\begin{equation*}
\underline{h}_\mu^S(\mathcal{T},x)=\lim\limits_{\epsilon\rightarrow0}\liminf\limits_{n\rightarrow\infty}-\frac{1}{\log \lambda_n}\log \mu(B_n(x,\epsilon));
\end{equation*}

\begin{equation*}
\underline{h}_\mu^S(\mathcal{T})=\int\underline{h}_\mu^S(\mathcal{T},x)d\mu(x).
\end{equation*}
We call $\underline{h}_\mu^S(\mathcal{T},x)$ the \textit{measure-theoretic slow entropy }of point $x$ with respect to $\mathcal{T}$,
and $\underline{h}_\mu^S(\mathcal{T})$ the \textit{measure-theoretic slow entropy} of $X$ with respect to $\mathcal{T}$.

\noindent{\textbf{Remark}}: From the definition of measure-theoretic slow entropy, it is easy to know
$$-\frac{1}{\lambda_n}\log \mu(B_n(x,\epsilon))=-\frac{1}{\log \lambda_n}\log \mu(B_n(x,\epsilon))\cdot \frac{\log \lambda_n}{\lambda_n},$$
because $\frac{\log \lambda_n}{\lambda_n}\rightarrow0$ as $n\rightarrow\infty$. If $\underline{h}_\mu^S(\mathcal{T})$  be finite, then~$h_{\mu}(\mathcal{T})=0$;
if $h_{\mu}(\mathcal{T})>0$, $\underline{h}_\mu^S(\mathcal{T})$ must be infinite.
\subsection{Weighted topological slow entropy.}
For any positive function $f:X\rightarrow [0,\infty), N\in\mathbb{N},\epsilon>0$, we define
\begin{equation*}
W(f,s,N,\epsilon)=\inf\sum_{i}c_i\bigg(\frac{1}{\lambda_{n_i}}\bigg)^s,
\end{equation*}
where the infimum is taken over all finite or countable families $\{(B_{n_i}(x_i,\epsilon),c_i)\}$ such that
$x_i\in X, n_i\geq N,0<c_i<\infty$ and $\sum_ic_i\chi_{B_i}\geq f$.

For $Z\subset X,f=\chi_{Z}$, set $W(Z,s,N,\epsilon)=W(\chi_Z,s,N,\epsilon)$. Clearly, the function $W(Z,s,N,\epsilon)$
does not decrease as $N$ increases and $\epsilon$ decreases. So the following limits exist:
\begin{equation*}
W(Z,s,\epsilon)=\lim\limits_{N\rightarrow\infty}W(Z,s,N,\epsilon), W(Z,s)=\lim\limits_{\epsilon\rightarrow0}W(Z,s,\epsilon).
\end{equation*}

It's not difficult to prove that there exists a critical value of parameter $s$, which we will denote by $h_{top}^W(\mathcal{T},Z)$, such that
\begin{equation*}
W(Z,s)=\left\{
                            \begin{array}{ll}
                              0, & s>h_{top}^W(\mathcal{T},Z) ;\\
                              \infty, & s<h_{top}^W(\mathcal{T},Z).
                            \end{array}
                          \right.
\end{equation*}
We call $h_{top}^W(\mathcal{T},Z)$ the\textit{ weighted topological slow entropy} of $Z$ with respect to $\mathcal{T}$.

%%%%%%%%%%%%%%%%%%%%%%%%%%%%%%%%%%%%%%%%%%%%%%%%%%%%%%%%%%%%%%%%%%%%%%%%%%%%%%%%%%%%%%%%%%%%%%%%%%%%%%%%%%%%%%%%%%%%%%%%%%%%%%%%%%%%%%%%%%
%%%%%%%%%%%%%%%%%%%%%%%%%%%%%%%%%%%%%%%%%%%%%%%%%%%%%%%%%%%%%%%%%%%%%%%%%%%%%%%%%%%%%%%%%%%%%%%%%%%%%%%%%%%%%%%%%%%%%%%%%%%%%%%%%%%%%%%%%%

\subsection{Equivalence of $h_{top}^S$ and $h_{top}^W$.}

\begin{prop}
\noindent{\rm (i)} For any $s\geq0,N\in\mathbb{N},\epsilon>0,M(\cdot,s,N,\epsilon) \text{\ and\ } W(\cdot,s,N,\epsilon)$
 are outer measures on $X$.

\noindent{\rm (ii)}For any $s\geq0$, both $M(\cdot,s)$ and $W(\cdot,s)$ are metric outer measures on $X$.
\end{prop}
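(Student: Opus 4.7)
The plan is to check the outer measure axioms for $M(\cdot,s,N,\epsilon)$ and $W(\cdot,s,N,\epsilon)$ directly from their definitions as infima over (weighted) Bowen-ball covers, and then upgrade to the metric outer measure property for $M(\cdot,s)$ and $W(\cdot,s)$ by exploiting that a Bowen ball of small radius cannot meet two sets with positive separation.

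For part (i), I would verify the three axioms in order. The empty set gets zero by taking the empty family (and the empty weighted family, whose sum is $0 \geq \chi_{\emptyset}$). Monotonicity is immediate: any admissible cover/weighted cover of $Z_2$ serves for $Z_1 \subseteq Z_2$. For countable subadditivity, given $Z = \bigcup_{k\geq 1} Z_k$ and $\eta>0$, I would pick, for each $k$, a family $\mathcal{G}_k = \{B_{n_i^{(k)}}(x_i^{(k)},\epsilon)\}_i$ with $n_i^{(k)} \geq N$ and $\sum_i (1/\lambda_{n_i^{(k)}})^s < M(Z_k,s,N,\epsilon) + \eta 2^{-k}$, then concatenate to obtain a single family covering $Z$. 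For $W$, the same idea works: if $\{(B_{n_i^{(k)}}(x_i^{(k)},\epsilon), c_i^{(k)})\}_i$ satisfies $\sum_i c_i^{(k)} \chi_{B_i^{(k)}} \geq \chi_{Z_k}$, then the merged family satisfies $\sum_{i,k} c_i^{(k)} \chi_{B_i^{(k)}} \geq \sum_k \chi_{Z_k} \geq \chi_Z$, and the total cost is controlled in the same way. Letting $\eta\to 0$ gives subadditivity.

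For part (ii), the only additional property to check is that $M(A\cup B,s) = M(A,s) + M(B,s)$ whenever $\rho := d(A,B)>0$, and likewise for $W$. The $\leq$ direction is already part (i). For the $\geq$ direction, I would fix any $\epsilon < \rho/2$. Since $d_n \geq d$, every Bowen ball satisfies $B_n(x,\epsilon) \subseteq B(x,\epsilon)$, so its diameter is at most $2\epsilon < \rho$; such a ball therefore cannot meet both $A$ and $B$. Given any admissible family $\mathcal{G}$ covering $A\cup B$, partition the index set into $I_A$ (balls meeting $A$) and $I_B \subseteq I\setminus I_A$ (balls meeting $B$), discarding those hitting neither. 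Then $\{B_{n_i}(x_i,\epsilon)\}_{i \in I_A}$ covers $A$ and $\{B_{n_i}(x_i,\epsilon)\}_{i \in I_B}$ covers $B$, with disjoint indices, so
\begin{equation*}
\sum_{i \in I_A}\Bigl(\tfrac{1}{\lambda_{n_i}}\Bigr)^s + \sum_{i\in I_B}\Bigl(\tfrac{1}{\lambda_{n_i}}\Bigr)^s \;\leq\; \sum_{i}\Bigl(\tfrac{1}{\lambda_{n_i}}\Bigr)^s,
\end{equation*}
yielding $M(A,s,N,\epsilon) + M(B,s,N,\epsilon) \leq M(A\cup B,s,N,\epsilon)$. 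For $W$, the same partition works: for $x\in A$ no ball indexed by $I_B$ contains $x$, so the weighted inequality $\sum_i c_i \chi_{B_i}\geq \chi_{A\cup B}$ restricts to $\sum_{i\in I_A} c_i \chi_{B_i}\geq \chi_A$, and symmetrically for $B$; the weighted costs therefore split additively. Passing $N\to\infty$ and $\epsilon\to 0$ (permissible since the bound holds for all $\epsilon < \rho/2$) gives the metric additivity.

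The arguments are essentially bookkeeping with the definitions; the only point requiring any care is the weighted version in part (ii), where one must observe that a ball confined to one of $A$ or $B$ contributes zero to the covering inequality on the other set, so the restriction of a $\chi_{A\cup B}$-cover to $I_A$ is still a valid cover of $\chi_A$. This is the step I would write out most carefully; everything else is routine.
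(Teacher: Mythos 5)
Your proof is correct, and there is nothing in the paper to compare it against: Proposition 4.1 is stated without proof, evidently regarded as routine (it is the analogue of a step in Feng--Huang). Your argument --- empty family, monotonicity, concatenation of near-optimal (weighted) covers with errors $\eta 2^{-k}$ for subadditivity, and for the metric property the observation that a Bowen ball of radius $\epsilon<\rho/2$ has diameter less than $\rho=d(A,B)$ and so meets at most one of $A$, $B$, with the careful remark that restricting a weighted cover of $\chi_{A\cup B}$ to the indices meeting $A$ still dominates $\chi_A$ --- is exactly the standard proof the authors intend, and it fills the gap correctly.
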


\begin{prop}
Suppose $Z\subset X$, for any $s\geq0,\epsilon,\delta>0$, we have $$M(Z,s+\delta,N,6\epsilon)\leq W(Z,s,N,\epsilon)\leq M(Z,s,N,\epsilon)$$
for large enough $N$. And then $h_{top}^S(\mathcal{T},Z)=h_{top}^W(\mathcal{T},Z)$.
\end{prop}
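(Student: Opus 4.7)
The right-hand inequality $W(Z,s,N,\epsilon) \leq M(Z,s,N,\epsilon)$ is immediate: any cover $\{B_{n_i}(x_i,\epsilon)\}$ of $Z$ with $n_i \geq N$ becomes an admissible weighted family on setting $c_i = 1$, since then $\sum_i c_i \chi_{B_i} \geq \chi_Z$ and the weighted cost equals the unweighted cost. The substantive content is $M(Z,s+\delta,N,6\epsilon) \leq W(Z,s,N,\epsilon)$ for $N$ sufficiently large; granting it, letting $N \to \infty$, then $\epsilon \to 0$, and finally $\delta \to 0$ yields $h_{top}^S(\mathcal{T},Z) = h_{top}^W(\mathcal{T},Z)$.

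To prove the substantive half I would transplant the Vitali-based comparison that Feng and Huang use to relate Bowen entropy to its weighted analogue. The first ingredient is a covering lemma for Bowen balls: if $B_n(x,\epsilon) \cap B_{n'}(x',\epsilon) \neq \emptyset$ with $n \leq n'$, a direct computation in $d_n$ (using $H_n \subset H_{n'}$) shows $B_{n'}(x',\epsilon) \subset B_n(x,3\epsilon)$; greedy selection ordered by increasing $n$ then extracts from any family of $\epsilon$-Bowen balls a pairwise-disjoint subfamily whose $3\epsilon$-enlargements cover the union (the factor $6\epsilon$ in the statement is simply slack from a subsequent bookkeeping step).

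The second ingredient is a dyadic decomposition of the weights. Given a nearly optimal weighted family $\{(B_{n_i}(x_i,\epsilon),c_i)\}$ realising $W(Z,s,N,\epsilon)$, I would partition the indices into levels $I_k = \{i : 2^{-k-1} < c_i \leq 2^{-k}\}$, $k \geq 0$, fix positive weights $a_k$ with $\sum_k a_k = 1$, and set $Z_k = \{x \in Z : \sum_{i \in I_k} c_i \chi_{B_i}(x) \geq a_k\}$, so $Z = \bigcup_k Z_k$. For each $k$ the covering lemma applied to the $I_k$-balls meeting $Z_k$ produces disjoint balls $\{B_{n_j}(x_j,\epsilon)\}_{j \in J_k}$ whose $6\epsilon$-enlargements cover $Z_k$. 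Choosing $z_j \in B_{n_j}(x_j,\epsilon) \cap Z_k$, summing the defining inequality $\sum_{i \in I_k} c_i \chi_{B_i}(z_j) \geq a_k$ over $j \in J_k$, and combining with a uniform multiplicity bound $\#\{j \in J_k : z_j \in B_i\} \leq C$ coming from the Vitali disjointness, gives an estimate of the shape
\begin{equation*}
\sum_{j \in J_k} \lambda_{n_j}^{-s} \leq C\,a_k^{-1}\,2^{k+1} \sum_{i \in I_k} c_i \lambda_{n_i}^{-s}.
\end{equation*}
Inserting the saving $\lambda_{n_j}^{-\delta} \leq \lambda_N^{-\delta}$, summing over $k$, and choosing $a_k$ (e.g.\ $a_k \asymp 2^{-k}(k+1)^{-2}$) so that the resulting $k$-series, after using $c_i \leq 2^{-k}$ on $I_k$, telescopes into a convergent geometric-type sum dominated by $\sum_i c_i \lambda_{n_i}^{-s}$, yields
\begin{equation*}
M(Z,s+\delta,N,6\epsilon) \leq C' \lambda_N^{-\delta}\, W(Z,s,N,\epsilon);
\end{equation*}
since $\lambda_N \to \infty$, the constant $C'\lambda_N^{-\delta}$ is at most $1$ once $N$ is large.

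\textbf{Main obstacle.} The technical crux is the uniform multiplicity bound $\#\{j \in J_k : z_j \in B_i\} \leq C$. Because Bowen balls $B_{n_i}(x_i,\epsilon)$ of the same nominal radius $\epsilon$ can have very different depths $n_i$, the disjointness of $\{B_{n_j}(x_j,\epsilon)\}_{j \in J_k}$ does not by itself restrict how many of them can crowd inside a large ball $B_i$; what rescues the count is a sub-Vitali extraction within each $I_k$ ordered by $n_j$, which is exactly the combinatorial device Feng and Huang employ. The only real difference from their setting is that the available saving $\lambda_N^{-\delta} \asymp N^{-d\delta}$ is polynomial rather than exponential, but it still suffices to swallow the polynomial-in-$k$ prefactors from the dyadic bookkeeping, which is precisely why the proposition is formulated with the infinitesimal loss $\delta > 0$ rather than with equality of the outer measures themselves.
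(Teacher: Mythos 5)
Your treatment of the easy inequality $W\leq M$ and of the passage from the displayed inequalities to $h^S_{top}(\mathcal{T},Z)=h^W_{top}(\mathcal{T},Z)$ is fine, and the $3\epsilon$-enlargement observation for intersecting Bowen balls of depths $n\leq n'$ is correct. The gap is exactly where you locate it, and it is not repaired by the device you appeal to. Your key estimate hinges on the uniform multiplicity bound $\#\{j\in J_k: z_j\in B_i\}\leq C$, but this is false in general: within a weight class $I_k$ the balls still have arbitrary depths, and a single shallow ball $B_{n_i}(x_i,\epsilon)$ can contain arbitrarily many points $z_j$ drawn from pairwise disjoint deep balls $B_{n_j}(x_j,\epsilon)$ with $n_j\gg n_i$ --- disjointness of small balls imposes no bound on how many fit inside a large one. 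Moreover, even granting some multiplicity control, your target inequality compares $\sum_{j}\lambda_{n_j}^{-s}$ with $\sum_{i}c_i\lambda_{n_i}^{-s}$, quantities weighted by incomparable factors $\lambda_{n_j}^{-s}$ versus $\lambda_{n_i}^{-s}$; a counting argument alone cannot convert one into the other when the depths are mixed inside a single class.

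The device of Feng--Huang, which the paper follows, is organized the other way around: one decomposes by depth first, setting $\mathcal{I}_n=\{i:n_i=n\}$ and $Z_{n,t}=\{x\in Z:\sum_{i\in\mathcal{I}_n}c_i\chi_{B_i}(x)>t\}$. Within a fixed $\mathcal{I}_n$ every ball is a genuine ball of the single metric $d_n$ and carries the identical cost $\lambda_n^{-s}$, so the $5r$-covering lemma applies verbatim and the whole estimate reduces to comparing a cardinality with a total weight; the weights $c_i$ are then handled not by a dyadic decomposition but by Federer's iterated integer-decrement construction (approximate the $c_i$ by positive integers, extract $m\approx t$ successive disjoint subfamilies whose $5\epsilon$-enlargements each cover $Z_{n,k,t}$, and keep the smallest one), which yields $\#(\mathcal{J}_{n,k,t})\lambda_n^{-s}\leq t^{-1}\sum_{i\in\mathcal{I}_{n,k}}c_i\lambda_n^{-s}$ with no multiplicity constant at all. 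The saving $\lambda_n^{-\delta}$ is then spent not on absorbing a constant but on making the depth levels summable, via the thresholds $t_n=\lambda_n^{-\delta}t$ and the inclusion $Z\subset\bigcup_{n\geq N}Z_{n,t_n}$, which is where ``$N$ large enough'' enters. There is also a compactness step your sketch omits: for infinite $\mathcal{I}_n$ one works with the truncations $\mathcal{I}_{n,k}$ and passes to a Hausdorff-metric limit of the centre sets, which is where $5\epsilon$ degrades to $6\epsilon$. To salvage your outline you would have to refine each weight class $I_k$ further by depth, at which point you have reproduced the paper's decomposition and the dyadic bookkeeping becomes superfluous.
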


\begin{lem}\cite{MA}
Let $(X,d)$ be a compact metric space and $\mathcal{B}=\{B(x_i,r_i)\}_{i\in\mathcal{I}}$ be a family open of (or closed) balls
in $X$. Then there exists a finite or countable subfamily $\mathcal{B}^{'}=\{B(x_i,r_i)\}_{i\in\mathcal{I}^{'}}$ of pairwise disjoint
balls in $\mathcal{B}$ such that $$\bigcup_{B\in\mathcal{B}}B\subseteq\bigcup_{i\in\mathcal{I}^{'}}B(x_i,5r_i).$$
\end{lem}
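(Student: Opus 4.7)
The plan is to prove the $5r$-covering lemma by the classical dyadic greedy construction. Since $X$ is compact, the radii are uniformly bounded: with $R := \operatorname{diam}(X) < \infty$ one has $r_i \le R$ for every $i \in \mathcal{I}$. I would stratify $\mathcal{B}$ into dyadic levels
\[
\mathcal{F}_k := \{B(x_i,r_i) \in \mathcal{B} : R/2^k < r_i \le R/2^{k-1}\}, \qquad k = 1, 2, \ldots,
\]
so that every ball in $\mathcal{B}$ belongs to exactly one $\mathcal{F}_k$, and, crucially, any ball appearing at some earlier level $k' \le k$ has radius strictly larger than $R/2^k$, i.e.\ more than half the upper endpoint of level $k$.

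Next I would build $\mathcal{B}'$ inductively level by level. Let $\mathcal{B}_1$ be a maximal pairwise disjoint subfamily of $\mathcal{F}_1$; existence is granted by Zorn's lemma applied to the inclusion-ordered set of pairwise disjoint subfamilies. Having selected $\mathcal{B}_1, \ldots, \mathcal{B}_{k-1}$, take $\mathcal{B}_k$ to be a maximal pairwise disjoint subfamily of $\{B \in \mathcal{F}_k : B \cap B' = \emptyset \text{ for every } B' \in \mathcal{B}_1 \cup \cdots \cup \mathcal{B}_{k-1}\}$. Setting $\mathcal{B}' := \bigcup_{k \ge 1} \mathcal{B}_k$ yields a pairwise disjoint subfamily of $\mathcal{B}$, and separability of $X$ (inherited from compactness) forces $\mathcal{B}'$ to be at most countable, since any disjoint collection of nonempty open balls injects into a fixed countable dense subset.

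To verify the covering claim, fix an arbitrary $B(x_i, r_i) \in \mathcal{B}$ and let $k$ be such that $B(x_i, r_i) \in \mathcal{F}_k$. Maximality of $\mathcal{B}_k$ forces some chosen ball $B(x_j, r_j) \in \mathcal{B}_1 \cup \cdots \cup \mathcal{B}_k$ to meet $B(x_i, r_i)$; otherwise $B(x_i, r_i)$ could have been appended to $\mathcal{B}_k$, contradicting maximality. Because $r_j > R/2^k$ while $r_i \le R/2^{k-1}$, we have $r_i < 2 r_j$, and a single triangle inequality then yields $B(x_i, r_i) \subseteq B(x_j, 2 r_i + r_j) \subseteq B(x_j, 5 r_j)$, as required. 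I do not foresee a serious obstacle here: this is the classical Vitali--Wiener argument, and the constant $5 = 2 \cdot 2 + 1$ is dictated exactly by the dyadic factor-of-two radius comparison together with one triangle-inequality hop. The only mildly delicate point is the Zorn-type extraction of a maximal disjoint subfamily at each level, which can alternatively be performed by transfinite recursion or, more cheaply, by first reducing $\mathcal{B}$ to a countable subcover using separability of $X$.
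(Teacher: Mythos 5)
Your argument is correct and complete: the dyadic stratification by radius, the greedy/maximal disjoint selection at each level, countability via separability, and the radius comparison $r_i< 2r_j$ feeding into $d(y,x_j)\le 2r_i+r_j<5r_j$ all hold up, and this is precisely the classical $5r$-covering argument. The paper itself gives no proof of this lemma --- it simply cites Mattila --- and your proof is essentially the one found in that reference (and in Federer), so there is nothing further to reconcile. The only caveat worth recording is the degenerate case of zero-radius balls, where countability of a disjoint subfamily can fail; this is harmless here since the lemma is applied only to Bowen balls of a fixed radius $\epsilon>0$.
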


\noindent{\it Proof of Proposition4.2.} We follow the argument in \cite{FH} for the condition of $L$-actions. Let $Z\subset X,s\geq
0,\varepsilon,\delta>0$, set $g=\chi_ Z,c_i\equiv 1$ in the definition of weighted topological entropy,
we have $W(Z,s,N,\epsilon)\leq M(Z,s,N,\epsilon)$ for $\forall N\in \mathbb{N}$.

Next, we prove $M(Z,s+\delta,N,\epsilon)\leq W(Z,s,N,\epsilon)$ for large enough $N$.

Let$\{(B_{n_i}(x_i,\epsilon),c_i)\}_{i\in \mathcal{I}}$ be a family so that $\mathcal{I}\subseteq \mathbb{N},
x_i\in X,0\leq
c_i<\infty,n_i\geq N,$ and
\begin{equation}\label{2.3}
    \sum_i c_i\chi_{B_i}\geq\chi_Z,
\end{equation}
here $B_i:=B_{n_i}(x_i,\epsilon)$. We claim that
\begin{equation}\label{2.4}
    M(Z,s+\delta,N,6\epsilon)\leq\sum_{i\in\mathcal{I}}c_i\bigg(\frac{1}{\lambda_{n_i}}\bigg)^s
\end{equation}
which implies $M(Z,s+\delta,N,6\epsilon)\leq W(Z,s,N,\epsilon)$.

We denote $$\mathcal{I}_n=\{i\in \mathcal{I}:n_i=n\},$$
and $$\mathcal{I}_{n,k}=\{i\in \mathcal{I}_n:i\leq k\}$$ for $n\geq N,k\in\mathbb{N}.$
We write $B_i:=B_{n_i}(x_i,\epsilon),5B_i:=B_{n_i}(x_i,5\epsilon)$ for $i\in\mathcal{I}$.
Obviously we may assume$B_i\neq B_j $ for $i\neq j$. For $t>0$, set
\begin{equation*}
    Z_{n,t}=\{x\in Z:\sum _{i\in \mathcal{I}_n}c_i\chi_{B_i}(x)>t\}\ \
\end{equation*}
and
\begin{equation*}
    Z_{n,t,k}=\{x\in Z:\sum _{i\in \mathcal{I}_{n,k}}c_i\chi_{B_i}(x)>t\}.
\end{equation*}
We divide the proof of (4.2) into the following three steps.

Step 1. For each $n\geq N, k\in \mathbb{N}$, and~$t>0$, there exists a finite set $\mathcal{J}_{n,k,t}\subseteq\mathcal{I}_{n,k} $
such that the ball $B_i,i\in\mathcal{J}_{n,k,t}$ are pairwise disjoint, $Z_{n,t,k}\subseteq\cup_{i\in\mathcal{J}_{n,k,t}}5B_i$,
and
\begin{equation*}
    \#(\mathcal{J}_{n,k,t})\bigg(\frac{1}{\lambda_n}\bigg)^s\leq\frac{1}{t}\sum_{i\in \mathcal{I}_{n,k}}c_i\bigg(\frac{1}{\lambda_n}\bigg)^s.
\end{equation*}
we will use the method of Federer \cite{FE}, also Mattila \cite{MA} for $L-$actions. Since $\mathcal{I}_{n,k}$ is
finite, by approximating the $c_i$'s from above, we may assume that each $c_i$ is a positive rational, and then by multiplying with
a common denominator we may assume that each $c_i$ is a positive integer. Let $m$ be the least integer with $m\geq t$.
Denote $\mathcal{B}=\{B_i,i\in\mathcal{I}_{n,k}\}$, and define ${u:\mathcal{B}\rightarrow\mathbb{Z}}$, by $u(B_i)=c_i$.
Since $B_i\neq B_j$, for $i\neq j$, so $u$ is well defined. We define by introduction integer-valued functions $v_0,v_1,\cdots,v_m$
on $\mathcal{B}$ and sub-families $\mathcal{B}_1,\mathcal{B}_{2},\ldots,\mathcal{B}_m$ of $\mathcal{B}$
starting with $v_0=u$. Using Lemma 4.1 repeatedly, we define inductively for $j=1,\cdots,m$, disjoint subfamilies $\mathcal{B}_{i}$
of $\mathcal{B}$ such that
\begin{equation*}
    \mathcal{B}_j\subset\{B\in\mathcal{B}:v_{j-1}(B)\geq1\},
\end{equation*}
\begin{equation*}
    Z_{n,k,t}\subseteq\cup_{B\in\mathcal{B}_j}5B
\end{equation*}
and the functions $v_j$ such that
$$v_j(B)=\left\{\begin{array}{ll}
v_{j-1}(B)-1,& \text{for}\ \ B\in\mathcal{B}_j ; \\
v_{j-1}(B),&
\text{for}\ \ B\in{\mathcal{B}\backslash\mathcal{B}_j\text{.}}\end{array}\right .$$
This is possible for $j<m$, $$Z_{n,k,t}\subseteq\{x:\sum_{B\in\mathcal{B}:B\ni x}v_j(B)\geq m-j\}, $$
whence every $x\in Z_{n,k,t}$ belongs to some ball $B\in \mathcal{B}$ with $v_j(B)\geq1$. Thus
\begin{eqnarray*}
\sum_{j=1}^{m}{\#(\mathcal{B}_{j})\bigg(\frac{1}{\lambda_n}\bigg)^{s}}&=&\sum_{j=1}^{m}{\sum_{B\in{\mathcal{B}}_{j}}(v_{j-1}(B)-v_{j}(B))\bigg(\frac{1}{\lambda_n}\bigg)^{s}}\\
&\leq & \sum_{B\in \mathcal{B}}{\sum_{j=1}^{m}(v_{j-1}(B)-v_{j}(B))\bigg(\frac{1}{\lambda_n}\bigg)^{s}} \\
&\leq& \sum_{B\in \mathcal{B}}u(B)\bigg(\frac{1}{\lambda_n}\bigg)^{s}=\sum_{i\in \mathcal{I}_{n,k}}c_i\bigg(\frac{1}{\lambda_n}\bigg)^{s}.
\end{eqnarray*}
Choose $j_0\in\{1,\cdots,m\}$ so that $\#(\mathcal{B}_{j_0})$ is the smallest. Then
\begin{eqnarray*}
\#(\mathcal{B}_{j_0})\bigg(\frac{1}{\lambda_n}\bigg)^{s}&\leq&\frac{1}{m}\sum_{i\in \mathcal{I}_{n,k}}c_i\bigg(\frac{1}{\lambda_n}\bigg)^{s}\leq  \frac{1}{t}\sum_{i\in \mathcal{I}_{n,k}}c_i\bigg(\frac{1}{\lambda_n}\bigg)^{s}.
\end{eqnarray*}
So $\mathcal{J}_{n,k,t}=\{i\in \mathcal{I}:B_i\in \mathcal{B}_{j_0}\}$ is desired.

Step 2. For each $n\in \mathbb{N}$ and $t>0$, we have
\begin{equation}\label{2.5}
    m(Z_{n,t},s+\delta,N,6\epsilon)\leq\frac{1}{\lambda_n^{\delta}t}\sum_{i\in \mathcal{I}_n}c_i\bigg(\frac{1}{\lambda_n}\bigg)^{s}.
\end{equation}

Assume $Z_{n,t}\neq\emptyset$, otherwise (4.3) is obvious. Since $Z_{n,k,t}\uparrow Z_{n,t}$, $Z_{n,k,t}\neq\emptyset$ for large
enough $k$. Let $\mathcal{J}_{n,k,t}$ be the sets constructed in Step 1. Then $\mathcal{J}_{n,k,t}\neq\emptyset$ for large
enough $k$. Set $E_{n,k,t}=\{x_i:i\in\mathcal{J}_{n,k,t}\}$. Note that the family of all non-empty compact subsets of $X$ is
compact with respect to Hausdorff distance(Federer[5,2.10.21]). It follows that there is a subsequence $(k_j)$ of natural
numbers and a non-empty compact set $E_{n,t}\subset X$ such that $E_{n,k_j,t}$ converges to $E_{n,t}$ in the Hausdorff distance
as $j\rightarrow\infty$. Since any two points in $E_{n,k,t}$ have a distance (with respect to $d_n$) not less than $\epsilon$,
so do the points in $E_{n,t}$. Thus $E_{n,t}$ is a finite set, moreover, $\#(E_{n,k_j,t})=\#(E_{n,t})$ when $j$ is large enough.
Hence $$\bigcup_{x\in E_{n,t}}B_n(x,5.5\epsilon)\supseteq
\bigcup_{x\in
E_{n,k_j,t}}B_n(x,5\epsilon)=\bigcup_{i\in\mathcal{J}_{n,k_j,t}}5B_i\supseteq
Z_{n,k_j,t},$$
when $j$ is large enough, and thus $\bigcup_{x\in E_{n,t}}B_n(x,6\varepsilon)\supseteq Z_{n,t}$. By the way, since
$\#{(E_{n,k_j,t})}=\#(E_{n,t})$ when $j$ is large enough, we have
\begin{equation*}
    \#(E_{n,t})\bigg(\frac{1}{\lambda_n}\bigg)^{s}\leq \frac{1}{t}\sum_{i\in
\mathcal{I}_n}c_i\bigg(\frac{1}{\lambda_n}\bigg)^{s}.
\end{equation*}
Therefore
\begin{equation*}
 M(Z_{n,t},s+\delta,N,6\epsilon)\leq\#(E_{n,t})\bigg(\frac{1}{\lambda_n}\bigg)^{s+\delta}\leq\frac{1}{{\lambda_n}^{\delta}t}\sum_{i\in \mathcal{I}_n}c_i\bigg(\frac{1}{\lambda_n}\bigg)^{s}.
\end{equation*}

Step 3. For any $t\in(0,1)$, we have
\begin{equation*}
 M(Z,s+\delta,N,6\epsilon)\leq\frac{1}{t}\sum_{i\in \mathcal{I}}c_i\bigg(\frac{1}{\lambda_{n_i}}\bigg)^{s},
\end{equation*}
which implies (4.2). In fact, fix $t\in(0,1)$. Then
$Z\subset\bigcup_{n=N}^\infty Z_{n,{\lambda_n}^{-\delta}t}$. Thus
by Proposition 4.1(i) and (4.3), we get
\begin{equation*}\begin{split}
M(Z,s+\delta,N,6\epsilon)&\leq\sum_{n=N}^{\infty}M(Z_{n,{\lambda_n}^{-\delta}t},s+\delta,N,6\epsilon)\\&
\leq\sum_{n=N}^{\infty}\frac{1}{t}{\sum_{i\in\mathcal{I}_n}c_i\bigg(\frac{1}{\lambda_n}\bigg)^s}=\frac{1}{t}\sum_{i\in\mathcal{I}}c_i\bigg(\frac{1}{\lambda_{n_i}}\bigg)^s.
\end{split}\end{equation*}
This completes the proof of the Proposition.

We will give a Frostman's lemma in dynamical system, which is important to our proof.
\begin{lem}
Suppose $K$ be a non-empty compact subset of $X$. Let $s\geq0,N\in\mathbb{N},\epsilon>0$. Set
$c:=W(K,s,N,\epsilon)>0.$ Then there exist a Borel probability measure $\mu$ on $X$ such that
$\mu(K)=1$ and $$\mu(B_n(x,\epsilon))\leq \frac{1}{c}\bigg(\frac{1}{\lambda_n}\bigg)^s.$$
\end{lem}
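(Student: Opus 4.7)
The plan is to build $\mu$ by the Hahn--Banach theorem together with the Riesz representation theorem, following the Howroyd-type argument adapted by Feng--Huang \cite{FH} to Bowen balls. The key is to design a single sublinear functional on $C(X)$ whose Hahn--Banach extension automatically enforces the ball bound, positivity, \emph{and} concentration of mass on $K$.

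Concretely, for $f\in C(X)$ I would set
\begin{equation*}
p(f):=\frac{1}{c}\inf\Bigl\{\sum_{i}c_i\bigl(\tfrac{1}{\lambda_{n_i}}\bigr)^s :\ n_i\geq N,\ c_i>0,\ \sum_{i}c_i\chi_{B_{n_i}(x_i,\epsilon)}(x)\geq f(x)\ \forall x\in K\Bigr\},
\end{equation*}
with the empty family admitted in the infimum, so that $p(f)=0$ whenever $f\leq 0$ on $K$. First I would verify that $p$ is positively homogeneous, subadditive, monotone, and real-valued on $C(X)$; finiteness uses compactness of $K$ to extract a finite Bowen cover at level $N$. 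The crucial normalization $p(\mathbf 1)=1$ holds because requiring $\sum c_i\chi_{B_i}\geq 1$ on $K$ is exactly requiring $\sum c_i\chi_{B_i}\geq \chi_K$ on $X$, so the infimum equals $W(K,s,N,\epsilon)=c$.

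Taking $V=\mathbb{R}\mathbf 1$ and $L_0(t\mathbf 1)=t$, the bound $L_0\leq p|_V$ is immediate (since $p(\mathbf 1)=1$ and $p(-|t|\mathbf 1)=0$). Hahn--Banach extends $L_0$ to a linear $L:C(X)\to\mathbb{R}$ with $L\leq p$. Positivity of $L$ is automatic: for $f\geq 0$ one has $-f\leq 0$ on $K$, hence $L(-f)\leq p(-f)=0$. Since $L(\mathbf 1)=1$, Riesz representation yields a Borel probability measure $\mu$ on $X$ with $L(f)=\int f\,d\mu$. The ball bound is then read off from $L\leq p$: for any $n\geq N$, $x\in X$, and $g\in C(X)$ with $0\leq g\leq \chi_{B_n(x,\epsilon)}$, the single family $\{(B_n(x,\epsilon),1)\}$ is admissible, so $L(g)\leq p(g)\leq \frac{1}{c}(\frac{1}{\lambda_n})^s$; approximating $\chi_{B_n(x,\epsilon)}$ from below by such continuous $g$ yields $\mu(B_n(x,\epsilon))\leq \frac{1}{c}(\frac{1}{\lambda_n})^s$. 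For concentration, given any compact $K'\subseteq X\setminus K$, Urysohn supplies $f\in C(X)$ with $f=1$ on $K'$ and $f=0$ on $K$; the empty cover then shows $p(f)=0$, so $\mu(K')\leq L(f)=0$, and writing $X\setminus K$ as an increasing union of compacts gives $\mu(K)=1$.

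The main technical obstacle is to let a single Hahn--Banach extension do the work for all three required properties simultaneously. The decisive move is to require the cover inequality $\sum c_i\chi_{B_i}\geq f$ \emph{only on $K$} rather than on all of $X$: this preserves sublinearity of $p$ and keeps $p(\mathbf 1)=1$, yet forces $p$ to vanish on continuous functions supported off $K$, which is precisely what is needed to extract $\mu(X\setminus K)=0$ from the bound $L\leq p$.
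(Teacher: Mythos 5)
Your proposal is correct and follows essentially the same route as the paper: your functional $p$ coincides with the paper's $p(f)=\frac{1}{c}W(\chi_K\cdot f,s,N,\epsilon)$, since requiring the domination $\sum_i c_i\chi_{B_i}\geq f$ only on $K$ is the same as requiring $\sum_i c_i\chi_{B_i}\geq \chi_K\cdot f$ on all of $X$. The subsequent Hahn--Banach extension from the constants, the Riesz representation, and the two Urysohn/inner-regularity arguments for $\mu(K)=1$ and for the Bowen-ball bound all match the paper's proof.
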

\begin{proof} Clearly $c<\infty$. We define a function $p$ on the space $C(X)$
of continuous real-valued functions on $X$ by
\begin{equation*}
    p(f)=\frac{1}{c}\text{W}(\chi_K\cdot f,s,N,\epsilon).
\end{equation*}
Let $\mathbf{1}\in C(X)$ denote the constant function $\mathbf{1}(x)\equiv 1$. It is easy to verify that

(1)$p(tf)=tp(f)$ for $f\in C(X)$ and $t\geq0$,

(2)$p(f+g)\leq p(f)+p(g)$ for $f,g\in C(X)$,

(3)$p(\mathbf{1})=1,0\leq P(f)\leq \parallel f\parallel_{\infty}$ for $f\in C(X)$, and $p(g)=0$ for $g\in C(X),g\leq0.$

By the Haha-Banach Theorem, we can extend the linear functional $t\rightarrow tp(1),t\in \mathbb{R}$, from the subspace
of constant functions to a linear functional $L:C(X)\rightarrow \mathbb{R}$ satisfying
\begin{equation*}
    L(\mathbf{1})=p(\mathbf{1})=1 \ \ \text{and}\ \ -p(-f)\leq L(f)\leq p(f), \text{for} \forall f\in C(X).
\end{equation*}
If $f\in C(X)$ with $f\geq 0$, then $p(-f)=0$ and so $L(f)\geq0$. Hence we can use the Riesz representation Theorem
to find a Borel probability measure $\mu$ on $X$ such that $L(f)=\int f d\mu$ for $f\in C(X)$.

Next, we prove $\mu(K)=1$. For any compact set $E\subset X\backslash K$, by Urysohn Lemma there exists $f\in C(X)$
such that $0\leq f\leq1,f(x)=1$ for $x\in E$ and $f(x)=0$ for $x\in K$. Then $f\cdot\chi_K=0$ and thus $p(f)=0$.
Hence $\mu(E)\leq L(f)\leq p(f)=0$. This shows $\mu(X\backslash K)=0,$ that is $\mu(K)=1$.

In the end, we prove $\mu(B_n(x,\epsilon))\leq\frac{1}{c}(\frac{1}{\lambda_n})^s$ for $\forall x\in X,n\geq N$.
In fact, for any compact set $E\subset B_n(x,\epsilon)$, by Urysohn lemma again, there is $f\in C(X)$, such that
$0\leq f\leq1, f(y)=1$ for $y\in E$ and $f(y)=0$ for $y\in X\backslash B_n(x,\epsilon)$. Then $\mu(E)\leq L(f)\leq p(f)$.
Since $\chi_K\cdot f\leq \chi_{B_n(x,\epsilon)}$, and $n\geq N$, we get $W(\chi_K\cdot f,s,N,\epsilon)\leq (\frac{1}{\lambda_n})^s$
and hence $p(f)\leq \frac{1}{c}(\frac{1}{\lambda_n})^s$. Therefore, we have $\mu(E)\leq\frac{1}{c}(\frac{1}{n})^s$. It follows that
\begin{equation*}
    \mu(B_n(x,\epsilon))=\sup\{\mu(E): E\subset B_n(x,\epsilon), \ \ \text{is compact}\}\leq\frac{1}{c}\bigg(\frac{1}{\lambda_n}\bigg)^s.
\end{equation*}
This completes the proof of the Lemma.
\end{proof}

\noindent{\textbf{Remark}: } There is a related slow entropy distribution principle. Using techniques in\cite{MW}, we have:
For any Borel set $E\subset X$ and Borel probability measure $\mu$ on $E$, if $\underline{h}_{\mu}^S(\mathcal{T},x)\leq s$ for all $x\in E$,
then $h^S_{top}(\mathcal{T,}E)\leq s$; if $\underline{h}_{\mu}^S(\mathcal{T},x)\geq s$ for all $x\in E,\mu(E)>0$, then $h^S_{top}(\mathcal{T},E)\geq  s$

\begin{thm}
Suppose $(X,\mathcal{T})$ be a TDS, $K\subset X$ be any non-empty compact subset. Then
\begin{equation*}
h^S_{top}(\mathcal{T},K)=\sup\{\underline{h}_\mu^S(\mathcal{T}):\mu\in \mathcal{M}(X),\mu(K)=1\}.
\end{equation*}
\end{thm}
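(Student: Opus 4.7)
The plan is to prove the two inequalities separately, relying on the slow-entropy distribution principle stated in the Remark preceding the theorem, the identification $h^S_{top}(\mathcal{T},\cdot) = h^W_{top}(\mathcal{T},\cdot)$ from Proposition 4.2, and the Frostman-type mass distribution Lemma 4.2.

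For the upper bound $\sup\{\underline{h}_\mu^S(\mathcal{T}) : \mu(K)=1\} \leq h^S_{top}(\mathcal{T},K)$, I fix $\mu\in\mathcal{M}(X)$ with $\mu(K)=1$ and suppose for contradiction that $\underline{h}_\mu^S(\mathcal{T}) > s$ for some $s > h^S_{top}(\mathcal{T},K)$. Since $\underline{h}_\mu^S(\mathcal{T}) = \int\underline{h}_\mu^S(\mathcal{T},x)\,d\mu(x) > s$, the Borel set $E := \{x\in K : \underline{h}_\mu^S(\mathcal{T},x) > s\}$ has positive $\mu$-measure. The second clause of the distribution principle in the Remark (applied with $\mu$ normalized on $E$) yields $h^S_{top}(\mathcal{T},E) \geq s$, and monotonicity (Proposition 2.1(ii)) forces $h^S_{top}(\mathcal{T},K) \geq s$, a contradiction.

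For the reverse inequality $h^S_{top}(\mathcal{T},K) \leq \sup\{\underline{h}_\mu^S(\mathcal{T}) : \mu(K)=1\}$, I fix any $0\leq s < h^S_{top}(\mathcal{T},K)$. By Proposition 4.2 one has $h^W_{top}(\mathcal{T},K) > s$, so $W(K,s) = \infty$, and I may choose $\epsilon>0$ and $N\in\mathbb{N}$ with $c := W(K,s,N,\epsilon) > 0$. Lemma 4.2 then supplies a Borel probability measure $\mu$ with $\mu(K)=1$ and
\begin{equation*}
\mu(B_n(x,\epsilon)) \leq c^{-1}\lambda_n^{-s} \quad \text{for all } x\in X, \ n\geq N.
\end{equation*}
Taking $-\log$, dividing by $\log\lambda_n$, and using $\log c/\log\lambda_n\to 0$ gives $\liminf_{n\to\infty} -\log\mu(B_n(x,\epsilon))/\log\lambda_n \geq s$ for every $x\in X$. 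Because this liminf is monotone non-decreasing as the radius shrinks, one obtains $\underline{h}_\mu^S(\mathcal{T},x) \geq s$ for every $x\in X$; integrating against $\mu$ yields $\underline{h}_\mu^S(\mathcal{T}) \geq s$, and $s \nearrow h^S_{top}(\mathcal{T},K)$ closes the argument. The boundary cases $h^S_{top}(\mathcal{T},K)\in\{0,+\infty\}$ are handled by a Dirac measure on $K$ and by letting $s$ grow without bound, respectively.

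The genuinely technical content has already been developed in the preparatory material: Proposition 4.2 (proved via the $5r$-covering Lemma 4.1 and a Federer-type multiplicity-reduction argument) identifies $h^S_{top}$ with the weighted entropy $h^W_{top}$, and the Frostman-type Lemma 4.2 (proved via Hahn--Banach together with Riesz representation) supplies the mass distribution measure. Once both are in hand, the variational principle itself is a short consequence, directly parallel to Feng--Huang's treatment of Bowen entropy. The one slow-entropy-specific subtlety to verify is that the single-scale Frostman bound $\mu(B_n(x,\epsilon))\leq c^{-1}\lambda_n^{-s}$ propagates to the multi-scale limit defining $\underline{h}_\mu^S(\mathcal{T},x)$; here one uses monotonicity of the liminf as $\epsilon\searrow 0$ together with $\log c/\log\lambda_n\to 0$, in place of the $\log c/n\to 0$ estimate familiar from Bowen entropy.
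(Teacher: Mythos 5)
Your proposal is correct, and for the harder inequality it coincides with the paper's proof: both pass from $h^S_{top}$ to $h^W_{top}$ via Proposition 4.2, extract $c=W(K,s,N,\epsilon)>0$ for $s<h^W_{top}(\mathcal{T},K)$, invoke the Frostman-type Lemma 4.2 to produce $\mu$ with $\mu(K)=1$ and $\mu(B_n(x,\epsilon))\leq c^{-1}\lambda_n^{-s}$, and then deduce $\underline{h}_\mu^S(\mathcal{T},x)\geq s$ pointwise; your explicit remark that the constant $c$ is killed by $\log c/\log\lambda_n\to 0$ and that the liminf is monotone in $\epsilon$ is exactly the (unstated) justification the paper is relying on.

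The two arguments diverge on the easier inequality $\underline{h}_\mu^S(\mathcal{T})\leq h^S_{top}(\mathcal{T},K)$. You route it through the slow-entropy distribution principle stated in the Remark before the theorem, applied to the positive-measure set $E=\{x\in K:\underline{h}_\mu^S(\mathcal{T},x)>s\}$, plus monotonicity of $h^S_{top}$. The paper instead argues directly: after the same monotone-convergence reduction to a fixed $\epsilon$, it truncates via $u_l=\min\{l,\int\underline{h}_\mu^S(\mathcal{T},x,\epsilon)\,d\mu-\tfrac1l\}$, finds a set $A_l$ of positive measure on which $\mu(B_n(x,\epsilon))\leq\lambda_n^{-u_l}$ uniformly for $n\geq N$, and then shows that any cover of $K\cap A_l$ by Bowen balls of radius $\epsilon/2$ has weight at least $\mu(A_l)>0$ (by recentering each ball at a point of $A_l$ and doubling the radius), giving $M(K,u_l)>0$ directly. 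Your route is shorter and cleaner, but note that the distribution principle is only asserted in the Remark (with a pointer to the techniques of Ma--Wen), not proved; its proof is essentially the covering computation the paper carries out inline. So your argument is logically sound given the Remark, but it is not self-contained relative to what the paper actually establishes; to close that loop you would either prove the second clause of the distribution principle (which reduces to the same $\epsilon/2$-recentering estimate) or fold that estimate directly into your contradiction argument as the paper does. The truncation by $u_l$ in the paper also quietly handles the case $\int\underline{h}_\mu^S(\mathcal{T},x,\epsilon)\,d\mu=\infty$, which your contradiction with a finite intermediate $s$ handles equally well.
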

\begin{proof} Firstly, we prove $h^S_{top}(\mathcal{T},K)\geq\underline{h}_\mu^S(\mathcal{T})$, for any $\mu\in \mathcal{M}(X),\mu(K)=1$.
We set
\begin{equation*}
\underline{h}_\mu^S(\mathcal{T},x,\epsilon)=\liminf\limits_{n\rightarrow\infty}-\frac{1}{\log \lambda_n}\log \mu(B_n(x,\epsilon))
\end{equation*}
for $x\in X,n\in\mathbb{N},\epsilon>0$. It's easy to see that $\underline{h}_\mu^S(\mathcal{T},x,\epsilon)$ is nonnegative and
increases as $\epsilon$ decreases. By the monotone convergence theorem ,we get
\begin{equation*}
\lim\limits_{\epsilon\rightarrow0}\int\underline{h}_\mu^S(\mathcal{T},x,\epsilon)d\mu(x)=\int\underline{h}_\mu^S(\mathcal{T},x)d\mu(x)=\underline{h}_\mu^S(\mathcal{T}).
\end{equation*}
Thus to show $h^S_{top}(\mathcal{T},K)\geq\underline{h}_\mu^S(\mathcal{T})$, we only to show $h^S_{top}(\mathcal{T},K)\geq\int\underline{h}_\mu^S(\mathcal{T},x,\epsilon)d\mu(x)$
for any $\epsilon>0$.

Now we fix $\epsilon>0, l\in\mathbb{N}$, set
$u_l=\min\{l,\int\underline{h}_\mu^S(\mathcal{T},x,\epsilon)d\mu(x)-\frac{1}{l}\}$,
then exist a Borel set $A_l\subset X,\mu(A_l)>0,N\in \mathbb{N}$
such that
$$\mu(B_n(x,\epsilon))\leq \bigg(\frac{1}{\lambda_n}\bigg)^{u_l}, \forall x\in A_l, n\geq N.\eqno{(4.4)}$$
Let $\{B_{n_i}(x_i,\epsilon/2)\}$ ba a finite or countable family such that $x_i\in X,n_i\geq N$ , and $K\cap A_l\subset \bigcup_iB_{n_i}(x_i,\epsilon/2)$.
We may as well assume that for each $i$, $B_{n_i}(x_i,\epsilon/2)\bigcap(K\cap A_l)\neq\emptyset$, and select $y_i\in B_{n_i}(x_i,\epsilon/2)\bigcap(K\cap A_l)$.
Then by (4.4),we have
\begin{equation*}\begin{split}
\sum_i\bigg(\frac{1}{\lambda_{n_i}}\bigg)^{u_l}&\geq \sum_i\mu(B_{n_i}(y_i,\epsilon))\\&\geq \sum_i\mu(B_{n_i}(x_i,\epsilon/2))
\\&\geq\mu(K\cap A_l)=\mu(A_l)>0.
\end{split}\end{equation*}
So, we get
\begin{equation*}\begin{split}
m(K,u_l)&\geq m(K,u_l,N,\epsilon/2)\\&\geq m(K\cap A_l,u_l,N,\epsilon/2)
\\&\geq\mu(A_l)>0.
\end{split}\end{equation*}
Therefore, $h^S_{top}(\mathcal{T},K)\geq u_l$. Letting $l\rightarrow\infty$, we get $h^S_{top}(\mathcal{T},K)\geq\int\underline{h}_\mu^S(\mathcal{T},x,\epsilon)d\mu(x)$.
Thus $h^S_{top}(\mathcal{T},K)\geq\underline{h}_\mu^S(\mathcal{T})$.

We next prove $h^S_{top}(\mathcal{T},K)\leq\{\underline{h}_\mu^S(\mathcal{T}):\mu\in
\mathcal{M}(X),\mu(K)=1\}$. We may as well assume
$h^S_{top}(\mathcal{T},K)>0$, otherwise the conclusion is obvious. By
Proposition 4.2, $h^S_{top}(\mathcal{T},K)=h_{top}^W(\mathcal{T},K)$. Suppose
$0<s<h^W_{top}(\mathcal{T},K)$, then there exists$\epsilon,N\in\mathbb{N},$
such that $c=W(K,s,N,\epsilon)>0$. By Lemma 4.2, there exists
$\mu\in\mathcal{M}(X),\mu(K)=1$, such that
$$\mu(B_n(x,\epsilon))\leq \frac{1}{c}\bigg(\frac{1}{\lambda_n}\bigg)^s$$
for any $x\in X,n\geq N$. And then $\underline{h}_\mu^S(T,x)\geq
s$ for each $x\in X$. Therefore,
$\underline{h}_\mu^S(\mathcal{T})\geq\int\underline{h}_\mu^S(\mathcal{T},x)d\mu(x)\geq
s.$ The proof is completed.
\end{proof}
\begin{cro}
Suppose $(X,T)$ be a TDS. Then
\begin{equation*}
h^S_{top}(T,X)=\sup_{\mu\in \mathcal{M}(X)}\underline{h}_\mu^S(T).
\end{equation*}
\end{cro}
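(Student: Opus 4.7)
The plan is to deduce this immediately from Theorem 4.1 by specializing the compact set $K$ to be the whole space $X$. Since $(X,d)$ is itself a compact metric space, $X$ is a non-empty compact subset of $X$, so Theorem 4.1 applies with $K=X$ and yields
\[
h^S_{top}(T,X)=\sup\{\underline{h}_\mu^S(T):\mu\in \mathcal{M}(X),\ \mu(X)=1\}.
\]

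The only thing to observe is that the side condition $\mu(X)=1$ is automatic: every element of $\mathcal{M}(X)$ is by definition a Borel probability measure on $X$, hence assigns mass $1$ to $X$. Consequently the index set $\{\mu\in\mathcal{M}(X):\mu(X)=1\}$ coincides with $\mathcal{M}(X)$, and the supremum on the right equals $\sup_{\mu\in\mathcal{M}(X)}\underline{h}_\mu^S(T)$. Note also that the corollary is phrased for a $\mathbb{Z}_+$-action (writing $T$ instead of $\mathcal{T}$), but Theorem 4.1 was established in the general $L$-action setting with $L=\mathbb{Z}^d$ or $\mathbb{Z}_+^d$, so the case $d=1$ is a direct particular instance.

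There is no real obstacle here; the corollary is a one-line specialization of the variational principle. The only minor bookkeeping point is confirming that the hypothesis ``$K$ non-empty compact'' in Theorem 4.1 is met by $X$ itself, which is immediate from the standing assumption that $(X,d)$ is a compact metric space (non-emptiness is implicit in the setup of a TDS).
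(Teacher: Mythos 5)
Your proposal is correct and is exactly the intended derivation: the paper states the corollary as an immediate specialization of Theorem 4.1 to $K=X$, with the condition $\mu(X)=1$ holding automatically for every Borel probability measure. Nothing further is needed.
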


%%%%%%%%%%%%%%%%%%%%%%%%%%%%%%%%%%%%%%%%%%%%%%%%%%%%%%%%%%%%%%%%%%%%%%%%%

\end{document}